\renewcommand\@makefnmark{%
  \hbox{\@textsuperscript{\normalfont\color{black}\@thefnmark}}}
\DeclareMathAlphabet\mathcalbf{OMS}{cmsy}{b}{n}
\DeclareMathAlphabet\EuScript{U}{eus}{m}{n}
\DeclareMathAlphabet\EuScriptBold{U}{eus}{b}{n}
\numberwithin{equation}{section}
\newtheorem{theorem}{Theorem}[section]
\newtheorem{lemma}[theorem]{Lemma}
\newtheorem{proposition}[theorem]{Proposition}
\newtheorem{definition}[theorem]{Definition}
\newtheorem{remark}[theorem]{Remark}
\newcommand{\Cine}{\EuScript C_\epsilon}
\newcommand{\Rine}{\EuScript R_\epsilon}
\newcommand{\Sopo}{\EuScript S_\omega}
\newcommand{\SopO}{\EuScript S_{\Ot}}
\newcommand{\Op}{\Omega_p}
\newcommand{\Ot}{\Omega_2}
\newcommand{\pp}{\psi_p}
\newcommand{\pt}{\psi_2}
\newcommand{\Tinv}{ (\mathcal T^s_\epsilon)^{-1}}
\def\C{\mathbb C}
\begin{document}
\allowdisplaybreaks

\title[Weighted Cauchy--Szeg\H{o} Projection]
{The Cauchy--Szeg\H{o} Projection  for domains\\ in $\mathbb C^n$ with minimal smoothness:\\ 
 weighted theory}

\author{Xuan Thinh Duong, Loredana Lanzani, Ji Li and Brett D. Wick}

\address{Xuan Thinh Duong, Department of Mathematics, Macquarie University, NSW, 2109, Australia}
\email{xuan.duong@mq.edu.au}

\address{Loredana Lanzani, University of Bologna, Piazza di Porta S. Donato 5, 40126 Bologna, Italy}
\email{loredana.lanzani2@unibo.it}

\address{Ji Li, Department of Mathematics, Macquarie University, NSW, 2109, Australia}
\email{ji.li@mq.edu.au}

\address{Brett D. Wick, Department of Mathematics \& Statistics\\
         Washington University - St. Louis\\
         St. Louis, MO 63130-4899 USA
         }
\email{wick@math.wustl.edu}

\subjclass[2020]{32A25, 32A26,  32A50, 32A55, 32T15, 42B20, 42B35}
\keywords{Cauchy--Szeg\H{o} projection, Szeg\H o projection, orthogonal projection, Cauchy transform, domains in $\C^n$ with minimal smoothness, commutator, boundedness and compactness, space of homogeneous type, strongly pseudoconvex}

\begin{abstract}
Let $D\subset\C^n$ be a bounded, strongly pseudoconvex domain whose boundary $bD$ satisfies the minimal regularity condition of class $C^2$.  A 2017 result of Lanzani \& Stein \cite{LS2017} states that 
the Cauchy--Szeg\H{o} projection $\Sopo$ defined with respect to a bounded,
positive continuous multiple $\omega$ of induced Lebesgue measure,
  {maps $L^p(bD, \omega)$ to $L^p(bD, \omega)$ continuously} for any $1<p<\infty$.
  Here we show that $\Sopo$ 
  satisfies explicit quantitative bounds in $L^p(bD, \Op)$, for any $1<p<\infty$ and for any $\Op$ in the maximal
  class
   of \textit{$A_p$}-measures, that is for $\Omega_p = \psi_p\sigma$ where  $\psi_p$ is a Muckenhoupt $A_p$-weight and $\sigma$ is the induced Lebesgue measure (with 
   $\omega$'s as above being a sub-class). Earlier results 
  rely upon an asymptotic expansion and subsequent pointwise estimates of the Cauchy--Szeg\H o kernel, but these are unavailable in our setting of minimal regularity {of $bD$}; at the same time,
 more recent 
  techniques 
  that allow to
   handle domains with minimal regularity  \cite{LS2017}
   are not applicable to $A_p$-measures.
    It turns out that the method of {quantitative} extrapolation is an appropriate replacement for the missing tools.

 To finish, we identify a class of holomorphic Hardy spaces defined with respect to $A_p$-measures for which a meaningful notion of Cauchy-Szeg\H o projection can be defined when $p=2$.
\end{abstract}

\maketitle


\centerline{\em In memory of J. J. Kohn}
\vskip0.2in

\section{Introduction}
Given a domain $D\Subset \mathbb C^n$ with rectifiable boundary and a reference measure $\mu$ on $bD$ (the boundary of $D$), the {\em Cauchy-Szeg\H o projection $S_\mu$} is the (unique) orthogonal projection of the Hilbert space $L^2(bD, \mu)$ onto the holomorphic Hardy space $H^2(bD, \mu)$. The {\em $L^p(bD, \mu)$- regularity problem for $S_\mu$} that is, the study of the boundedness of $S_\mu$  on $L^p(bD, \mu)$ with $p\neq 2$, is a deep problem in harmonic analysis whose solution is highly dependent on the analytic and geometric properties of $D$ and of the reference measure $\mu$; see e.g., \cite{KS}, \cite{KL2}, \cite{LS2004}, \cite{LS2017}, \cite{PS}, \cite{WW}.

In this paper we 
present
 a new approach to the $L^p$-regularity problem for the Cauchy--Szeg\H o projection $\Sopo$ defined with respect to $\omega = \Lambda \, \sigma$ (any) bounded, positive continuous multiple of the induced Lebesgue measure $\sigma$ associated to a 
 strongly pseudoconvex domain $D\Subset \mathbb C^n$, with $n\geq 2$, that satisfies the minimal regularity condition of class $C^2$. As an application, we obtain the quantitative bound
\begin{equation}\label{E:main-1}
\|\Sopo\, g\|_{L^p(bD, \Op)}\lesssim [\Omega_p]_{A_p}^{3\cdot\max\left\{1,\, \frac{1}{p-1}\right\}}
\,\|g\|_{L^p(bD, \Op)}, \quad 1<p<\infty,\quad 
\end{equation}
for any $\omega\in \{\Lambda\sigma\}_\Lambda$ as above and for any $\Op$ in the class of Muckenhoupt measures $A_p(bD)$, where $[\Op]_{A_p}$ stands for the $A_p$-character of $\Op$
  while the implicit constant depends solely on $p$, $\omega$ and $D$; see Theorem \ref{T:Szego-for-A_p} for the precise statement.  We point out that the power $3$ can be sharpened as $2+\delta$ for any $\delta>0$. However, it can not be reduced to $2$ due to the structure of minimal smoothness of the domain. By contrast,
 the norm bounds 
  obtained in the reference result \cite{LS2017}:
\begin{equation}\label{E:LS-o}
\|\Sopo\, g\|_{L^p(bD, \omega)}
\leq C(D, \omega, p)\, 
\|g\|_{L^p(bD, \omega)}, \quad 1<p<\infty
\quad \omega\in \{\Lambda\sigma\}_\Lambda \ \text{as above,}
\end{equation}
and  its recent improvement for $\omega:= \sigma$, \cite[Theorem 1.1]{WW}:
\begin{equation}\label{E:WW}
\|\EuScript S_\sigma\, g\|_{L^p(bD, \Op)}\leq
\tilde C(D, [\Op]_{A_p})
\|g\|_{L^p(bD, \Op)},\quad 1<p<\infty,\quad \Op\in A_p(bD)
\end{equation}
 are unspecified functions of the stated variables.

As is well known, the $A_p$-measures 
are the maximal (doubling) measure-theoretic
 framework 
for a great variety of singular integral operators. Any positive, continuous multiple of $\sigma$ is, of course, a member of $A_p(bD)$ for {\em any} $1<p<\infty$, but the class $\{A_p(bD)\}_p$ far encompasses the family $\{\Lambda\,\sigma\}_\Lambda$.
 The classical methods for $\Sopo$ rely on the asymptotic expansion of the Cauchy--Szeg\H o kernel, {see e.g., the foundational works \cite{FEF}, \cite{NRSW} and \cite{PS},} however this expansion is not available  if $D$ is non-smooth (below the class $C^3$). What's more, the Cauchy--Szeg\H o projection $\Sopo$
may not be Calder\'on--Zygmund, see \cite{LS2004}, thus none of \eqref{E:main-1}--\eqref{E:WW}  can follow by a direct application of
the Calder\'on--Zygmund theory. The proof of \eqref{E:LS-o} given in \cite{LS2017}  starts with the comparison of $\Sopo$
with the members of an ad-hoc
 family $\{\Cine\}_\epsilon$ of non-orthogonal projections (the so-called Cauchy--Leray integrals):
\begin{equation}\label{E:prelim}
\Cine = \Sopo\circ\big[ I-(\Cine^\dagger - \Cine)\big]\quad \text{in}\quad L^2(bD, \omega),\quad 0<\epsilon<\epsilon (D),
\end{equation}
where the upper-script $\dagger$ denotes the adjoint in $L^2(bD, \omega)$. The operators $\{\Cine\}_\epsilon$ are bounded on $L^p(bD, \omega)$, $1<p<\infty$, by an application of the $T(1)$ theorem. Furthermore, an elementary, Hilbert space-theoretic observation yields the factorization
\begin{equation}\label{E:LS-1}
\Sopo = \Cine\circ\big[ I-(\Cine^\dagger - \Cine)\big]^{-1}\quad \text{in}\quad L^2(bD, \omega),\quad 0<\epsilon<\epsilon (D),
\end{equation}
along with its refinement
\begin{equation}\label{E:LS-2}
  \Sopo = \bigg(\EuScript C_\epsilon +
   \Sopo\circ\big((\EuScript R^s_\epsilon)^\dagger-\EuScript R^s_\epsilon\big)\bigg)\circ 
   \big[I-\big((\Cine^s)^\dagger-\Cine^s\big)\big]^{-1}\quad \text{in}\ \ L^2(bD, \omega), \quad 0<\epsilon<\epsilon (D).
\end{equation}
For the latter, the Cauchy--Leray integral $\Cine$ is decomposed as the sum
$\Cine = \Cine^s + \Rine^s$ where the principal term $\Cine^s$  (bounded, by another application of the $T(1)$ theorem) enjoys  the cancellation 
\begin{equation}\label{E:oldcancellation1}
\|(\EuScript C_\epsilon^s)^\dagger-\EuScript C_\epsilon^s\|_{L^p(bD,\,\omega)\to L^p(bD,\,\omega)} 
  \leq\, \epsilon^{1/2}\, C(p, D, \omega),\quad 1<p<\infty\, ,\quad 0<\epsilon<\epsilon (D).
  \end{equation}
By contrast, the remainder $\EuScript R_\epsilon^s$ has $L^p\to L^p$ norm that may blow up as $\epsilon\to 0$ but is weakly smoothing in the sense that
\begin{equation}\label{E:oldcancellation2}
 \EuScript R_\epsilon^s\ \text{and}\  (\EuScript R_\epsilon^s)^\dagger\ \text{are bounded}: L^1(bD, \omega)\to L^\infty (bD, \omega)\quad \text{for any}\quad 
0<\epsilon<\epsilon (D).
\end{equation}
Combining \eqref{E:oldcancellation1} and \eqref{E:oldcancellation2} with the bounded inclusions
 \begin{equation}\label{E:Lp-incl}
L^2(bD, \omega)\ \hookrightarrow\  L^p(bD, \omega)\ \hookrightarrow\ L^1(bD, \omega),\quad 1<p\leq 2,
\end{equation}
one concludes that there is $\epsilon = \epsilon (p)$ such that the right hand side of 
  \eqref{E:LS-2}, and therefore $\Sopo$, extends to a bounded operator on
$L^p(bD, \omega)$ for any $\omega\in \{\Lambda\,\sigma\}_\Lambda$ whenever $1<p< 2$;
 the proof for the full $p$-range
 then follows by duality.
\vskip0.1in
Implementing this argument to {\em all} $A_p$-measures presents conceptual difficulties: for instance, there are no analogs of \eqref{E:oldcancellation2} nor of \eqref{E:Lp-incl} that are valid with $\Omega_p$ in place of $\omega$ because $A_p$-measures
 may change with $p$.
In \cite[Lemma 3.3]{WW} it is shown that \eqref{E:oldcancellation1} is still valid for $p=2$ if one replaces the measure $\omega$ in the $L^2(bD)$-space 
with an $A_2$-measure; the conclusion \eqref{E:WW} then follows by Rubio de Francia's original
extrapolation theorem \cite{R}.
\vskip0.1in

Here 
we obtain a new cancellation in $L^p(bD, \Omega_p)$ where the dependence of the norm-bound on the $A_p$-character of the measure is completely explicit, namely
\begin{equation}\label{E:newcancellation-2}
\|(\EuScript C_\epsilon^s)^\dagger-\EuScript C_\epsilon^s\|_{L^p(bD,\, \Op)\to L^p(bD,\, \Op)} 
  \lesssim \epsilon^{1/2} [\Op]_{A_p}^{\max\{1,{1\over p-1}\}},\quad 
  1<p<\infty
  \end{equation}
where 
 the implied constant depends on $p$, $D$ and $\omega$ {but is independent of $\Op$ and of $\epsilon$};
 see Proposition \ref{prop cancellation} for the precise 
  statement. 
  Combining 
   \eqref{E:newcancellation-2} 
  with the reverse H\"older inequality for $A_2(bD)$
   we obtain
 \begin{equation}\label{E:extrap}
  \|\Sopo \|_{L^2(bD, \Ot)\to L^2(bD, \Ot)}\lesssim\, [\Ot]_{A_2}
  \quad \text{for any}\ \ \Ot\in A_2(bD)\ \ \text{and any}\ \ \omega\in \{\Lambda\,\sigma\}_\Lambda.
 \end{equation}
 The conclusion \eqref{E:main-1}
  then follows by quantitative extrapolation \cite[Theorem 9.5.3]{Gra}, 
  which serves as an appropriate replacement for \eqref{E:oldcancellation2} and \eqref{E:Lp-incl}. Incidentally,
 we are not aware of other applications of extrapolation to several complex variables; yet quantitative extrapolation seems to provide an especially well-suited approach to the analysis of
 orthogonal projections onto spaces of holomorphic functions\footnote{the Cauchy--Szeg\H o and Bergman projections being two prime examples.} because orthogonal projections are naturally bounded, with minimal norm, on the Hilbert space where they are defined; thus there is always a baseline choice
  of $\Ot$ for which \eqref{E:extrap} holds trivially 
  (that is, with $[\Ot]_{A_2} =1$).
We anticipate that this approach will give new insight into other settings where this kind of $L^p$-regularity problems are unsolved, such as the strongly $\mathbb C$-linearly convex domains of class $C^{1,1}$ (\cite{LS2014}) and the $C^{1,\alpha}$ model domains of \cite{LS2019}.

 \vskip0.1in
 
{\em Can the Cauchy--Szeg\H o projection be defined for measures other than $\omega\in \{\Lambda\,\sigma\}_\Lambda$?} Orthogonal projections are highly dependent upon the choice of reference measure for the Hilbert space where they are defined: it is therefore natural to seek
 the maximal measure-theoretic framework for which the notion of Cauchy--Szeg\H o projection is meaningful.
The 
$A_p$-measures are an obvious choice in view of their historical relevance to the theory of singular integral operators. It turns out that in this context the Cauchy--Szeg\H o projection is meaningful only if it is defined with respect to $A_2$-measures that is, for $p=2$: one may define $\SopO$ 
 which is bounded: $L^2(bD, \Ot)\to L^2(bD, \Ot)$ for any $\Ot\in A_2(bD)$ with the minimal operator norm: $\|\SopO\| =1$; on the other hand there appears to be no well-posed notion of ``$\EuScript S_{\Omega_p}$'' if  $p\neq 2$. 
The $L^p(bD, \Omega_p)$-regularity problem for $\SopO$, 
while meaningful, is, at present, unanswered for $p\neq 2$.

\subsection{Statement of the main results}

We let $\sigma$ denote induced Lebesgue measure for $bD$ and we henceforth refer to the family
\begin{equation*}
\{\Lambda\sigma\}_\Lambda \equiv \left\{\omega:= \Lambda\, \sigma,\ \Lambda\in C(bD), \  0<c(D, \Lambda)\leq\Lambda(w)\leq C(D, \Lambda)<\infty\quad \text{for any}\ w\in bD\right\}
\end{equation*}
as the {\em Leray Levi-like measures}. This is because the Leray Levi measure $\lambda$, which plays a distinguished role in the analysis
 \cite{LS2017} of the Cauchy--Leray integrals 
$\{\Cine\}_\epsilon$ and of their truncations $\{\Cine^s\}_\epsilon$, is a member of this family on account of the identity
 \begin{equation}\label{E:Leray Levi to sigma}
 d\lambda(w) =\Lambda(w)d\sigma(w),\quad w\in bD,
 \end{equation}
where $\Lambda \in C(\overline{D})$ satisfies the required bounds
$ 0< \epsilon(D) \leq \Lambda(w)\leq C(D)<\infty$ for any $ w\in bD$
as a consequence of the strong pseudoconvexity and $C^2$-regularity and boundedness of $D$. 
Hence we may equivalently express any Leray Levi-like measure $\omega$ as
\begin{equation}\label{E:LL-weights}
\omega = \varphi \lambda
\end{equation}
for some $\varphi \in C(\overline{bD})$ such that
$ 0< m(D) \leq \varphi(w)\leq M(D)<\infty$ for any $ w\in bD$.
We refer to  
Section \ref{S:2} for the precise definitions 
  and to
\cite[Lemma VII.3.9]{Ra} for the proof of \eqref{E:Leray Levi to sigma} and a discussion of the geometric significance of $\lambda$.

For any Leray Levi-like measure $\omega$,  the holomorphic Hardy space $H^2(bD, \omega)$ is defined
exactly as in the classical setting of $H^2(bD, \sigma)$, simply by replacing $\sigma$ with $\omega$,
 see e.g.,  \cite[(1.1) and (1.2)]{LS2016}. In particular $H^2(bD, \omega)$ is a closed subspace of $L^2(bD, \omega)$ and we 
 let  $\Sopo$ denote the (unique) {\em orthogonal} projection of $L^2(bD, \omega)$ onto 
  $H^2(bD, \omega)$.

Our goal is to understand the behavior of $\Sopo$ on $L^p(bD, \Op)$ where $\{\Op\}$ is any {\em $A_p$-measure} that is, 
\begin{equation}\label{E:A2weights-sigma}
 \Op =\phi_p\, \sigma\, 
 \end{equation}
where the density $\phi_p$ is a Muckenhoupt 
 $A_p$-weight. 
  {The precise definition is given in Section \ref{S:2}}; 
 here we just note  that the Leray Levi-like measures
 are strictly contained in
 the class $\{\Op\}_p$ 
 in the sense that each 
 $\omega\in \{\Lambda\sigma\}_\Lambda$
  is an $A_p$-measure for {\em every} $1<p<\infty$.  
  We may now state our main result.
\begin{theorem}\label{T:Szego-for-A_p}
Let $D\subset \mathbb C^n$, $n\geq 2$, be  a bounded, strongly pseudoconvex domain of class $C^2$; let 
$\omega$ be any Leray Levi-like measure for $bD$ and let $\Sopo$ be the Cauchy--Szeg\H o projection 
associated to $\omega$. We have that 
\begin{equation}\label{E:extrapol-Sz}
 \|\Sopo \|_{L^2(bD, \Ot)\to L^2(bD, \Ot)}\lesssim [\Ot]_{A_2}^3\quad \text{for any}\ \ \Ot \in A_2(bD),
\end{equation}
where the implied constant depends solely on $D$ and $\omega$.
\end{theorem}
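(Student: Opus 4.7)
Since $[\Omega_2]_{A_2}\ge 1$, the theorem reduces to the stronger linear estimate \eqref{E:extrap}, namely $\|\Sopo\|_{L^2(\Omega_2)\to L^2(\Omega_2)}\lesssim [\Omega_2]_{A_2}$, and I will explain the latter. The plan is to follow the architecture of the proof of \eqref{E:LS-o} sketched in the introduction via the factorization \eqref{E:LS-2}, with two key substitutions tailored to the $A_2$-setting: the cancellation \eqref{E:oldcancellation1} is upgraded to its quantitative $A_2$-version \eqref{E:newcancellation-2} (Proposition \ref{prop cancellation}), and the elementary embeddings \eqref{E:Lp-incl}---which fail upon replacing the fixed measure $\omega$ by an $A_2$-measure $\Omega_2$---are replaced by the reverse H\"older inequality for $A_2(bD)$.

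First I would calibrate $\epsilon:=c\,[\Omega_2]_{A_2}^{-2}$ with $c$ a sufficiently small absolute constant so that the cancellation \eqref{E:newcancellation-2} at $p=2$ forces
$\|(\Cine^s)^\dagger-\Cine^s\|_{L^2(\Omega_2)\to L^2(\Omega_2)}\le 1/2$. Then the Neumann series for $\mathcal N_\epsilon:=\bigl[I-((\Cine^s)^\dagger-\Cine^s)\bigr]^{-1}$ converges on $L^2(\Omega_2)$ with $\|\mathcal N_\epsilon\|\le 2$. Evaluating \eqref{E:LS-2} on the common dense subspace $C(bD)\subset L^2(\omega)\cap L^2(\Omega_2)$ yields, for $f\in C(bD)$,
$$
\Sopo f=\Cine\,\mathcal N_\epsilon f+\Sopo\bigl((\Rine^s)^\dagger-\Rine^s\bigr)\mathcal N_\epsilon f.
$$
For the first summand I would use the sharp weighted estimate $\|\Cine\|_{L^2(\Omega_2)\to L^2(\Omega_2)}\lesssim [\Omega_2]_{A_2}$, which follows from the $A_2$-theorem for Calder\'on-Zygmund operators on the space of homogeneous type $(bD,d,\sigma)$ applied to the Cauchy-Leray integral $\Cine$ (the kind of bound underlying the analysis in \cite{WW}).

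The hard part is the second summand: $\Sopo$ is not yet known to be bounded on $L^2(\Omega_2)$---precisely what we are trying to prove---while the operator norm of $(\Rine^s)^\dagger-\Rine^s$ on $L^p(\omega)$ may blow up like $\epsilon^{-N}$. The idea is to exploit the weak smoothing property \eqref{E:oldcancellation2}, which makes the intermediate function $g:=((\Rine^s)^\dagger-\Rine^s)\mathcal N_\epsilon f$ bounded on $bD$, together with the orthogonality $\|\Sopo\|_{L^2(\omega)\to L^2(\omega)}=1$, to control $\Sopo g$ in $L^2(\omega)$; the quantitative reverse H\"older inequality for $A_2(bD)$---with reverse-H\"older exponent depending on $[\Omega_2]_{A_2}$---then transports this bound into one in $L^2(\Omega_2)$ and simultaneously expresses $\|f\|_{L^1(\sigma)}$ in terms of $\|f\|_{L^2(\Omega_2)}$. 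The main obstacle is the quantitative bookkeeping: one must show that the combined powers of $[\Omega_2]_{A_2}$ arising from the reverse-H\"older step, together with the $\epsilon^{-N}$-blow-up of $(\Rine^s)^\dagger-\Rine^s$ matched against the choice $\epsilon\sim[\Omega_2]_{A_2}^{-2}$, collapse to a truly linear power of $[\Omega_2]_{A_2}$, thereby yielding \eqref{E:extrap} and hence the theorem.
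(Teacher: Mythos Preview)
Your architecture---start from the factorization \eqref{E:LS-2}, calibrate $\epsilon\sim[\Ot]_{A_2}^{-2}$ via Proposition~\ref{prop cancellation} so that the Neumann series for $\big[I-((\Cine^s)^\dagger-\Cine^s)\big]^{-1}$ converges on $L^2(bD,\Ot)$ with norm $\le 2$, then bound the two summands separately using reverse H\"older for the second---is exactly the paper's approach. The gap is in the first summand, and it forces the cubic exponent rather than the linear one you announce.

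You assert $\|\Cine\|_{L^2(bD,\Ot)\to L^2(bD,\Ot)}\lesssim[\Ot]_{A_2}$ with an implied constant independent of $\epsilon$, appealing to the $A_2$-theorem. But $\Cine$ is \emph{not} a fixed Calder\'on--Zygmund operator: the smoothness constant of its kernel in the $w$-variable is $c_\epsilon$ (see \eqref{gwz}\,(b)), and by \eqref{cepsilon bound} one has $c_\epsilon\lesssim\epsilon^{-1}$. The sharp weighted bound therefore reads $\|\Cine\|_{L^2(bD,\Ot)}\lesssim c_\epsilon\,[\Ot]_{A_2}$ (this is precisely Theorem~\ref{T:5.1}), and with your calibration $\epsilon\sim[\Ot]_{A_2}^{-2}$ one gets $c_\epsilon\sim[\Ot]_{A_2}^{2}$, hence
\[
\|\Cine\,\mathcal N_\epsilon f\|_{L^2(bD,\Ot)}\lesssim[\Ot]_{A_2}^{3}\,\|f\|_{L^2(bD,\Ot)}.
\]
This is where the exponent $3$ in \eqref{E:extrapol-Sz} comes from; as the paper notes (see the comment after the theorem and Remark~\ref{remark power}), one can reach $2+\delta$ for any $\delta>0$ but not below, so the linear bound you are aiming for is not attainable by this method under $C^2$ regularity. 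In particular your proposal, once corrected, proves exactly the theorem as stated and not \eqref{E:extrap}.

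A smaller point on the second summand: the paper does not get by with the trivial bound $\|\Sopo\|_{L^2(bD,\omega)\to L^2(bD,\omega)}=1$. After applying H\"older with exponent $q=(\gamma+1)/\gamma$ (where $\gamma=\gamma(\Ot)$ is the reverse-H\"older exponent of $\pt$), one needs $\|\Sopo H\|_{L^p(bD,\omega)}$ for $p=2(1+\gamma)/\gamma>2$, which is supplied by the $L^p(bD,\omega)$-regularity of $\Sopo$ from \cite{LS2017}. With this input and the $L^1\!\to\! L^\infty$ smoothing \eqref{E:RsBdd}, the $\mathfrak B_\epsilon$-contribution is actually $\lesssim[\Ot]_{A_2}^{1/2}$ and is not the bottleneck; the delicate ``collapse of $\epsilon^{-N}$ against reverse-H\"older powers'' you anticipate does not arise for this term.
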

As before, the power $3$ can be sharpened as $2+\delta$ for any $\delta>0$. However, it can not be reduced to $2$ due to the structure of minimal smoothness of the domain. The $L^p$-estimate \eqref{E:main-1} follows from \eqref{E:extrapol-Sz} by standard techniques, see {\cite[Theorem 9.5.3]{Gra}}.

\vskip0.1in

As mentioned earlier, extending the notion of Cauchy--Szeg\H o projection to the realm of
 $A_p$-measures requires, first of all, 
 a meaningful notion of  holomorphic Hardy space for such measures, but  this does not immediately arise from the classical theory
  \cite{STE2}; 
  here we adopt the approach of
\cite[(1.1)]{LS2016} and give the following

\begin{definition}\label{D:Hardy space}
Suppose $1\leq\, p<\infty$ and let $\Op$ be an $A_p$-measure. We define $H^p(bD, \Op)$ to be the space of functions $F$ that are holomorphic in $D$ {with}
$\mathcal N (F)\in L^p(bD, \Op)$, and set
\begin{equation}\label{E:NT-norm}
\|F\|_{H^p(bD, \Op)} := \|\mathcal N (F)\|_{L^p(bD, \Op)}.
\end{equation}
 Here $\mathcal N(F)$ denotes the non-tangential maximal function of $F$, that is
$$ \mathcal N(F)(\xi) := \sup_{z\in \Gamma_\alpha(\xi)}|F(z)|, \quad \xi\in bD,$$
{where}
 $ \Gamma_\alpha(\xi) =\{z\in D:\ |(z-\xi)\cdot \bar{\nu}_\xi| < (1+\alpha) \delta_\xi(z), |z-\xi|^2<\alpha \delta_\xi(z)\}$, with
$\bar{\nu}_\xi$ = the (complex conjugate of) the outer unit normal vector to $\xi\in bD$, and $\delta_\xi(z)=$ the minimum between the {$($Euclidean$)$} distance of $z$ to $bD$ and the distance of $z$ to the tangent 
space  at $\xi$.
\end{definition}
For the class of domains under consideration it is known that if $\Op$ is Leray Levi-like, such definition  agrees with the classical formulation \cite[(1.2)]{LS2016}; see also \cite{STE2}.
\vskip0.1in
\begin{proposition}\label{P:Hardy-def-NT}
Let $D\subset \mathbb C^n$, $n\geq 2$, be  a bounded, strongly pseudoconvex domain of class $C^2$.
Then, for any $1\leq\, p<\infty$ and any $A_p$-measure $\Op$ we have that $H^p(bD, \Op)$ is a closed subspace of $L^p(bD, \Op)$. More precisely, suppose that $\{F_n\}_n$ is a sequence of holomorphic functions in $D$ such that 
$
\|\mathcal N(F_n) - f\|_{L^p(bD, \Op)}\to 0\quad \text{as}\quad n\to\infty.
$
Then, there is an $F$  holomorphic in $D$ such that
$
\mathcal N(F)(w) = f(w)\quad
 \Op-a.e.\ w\in\ bD.
$
\end{proposition}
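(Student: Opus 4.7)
The plan proceeds in three stages. Stage~(I) converts the $L^p(bD,\Op)$-bound on $\mathcal N(F_n)$ into a uniform sup-norm bound on $\{F_n\}$ over every compact subset of $D$, by combining the geometry of the $\alpha$-approach regions with the $A_p$-structure of $\Op=\phi_p\sigma$. Stage~(II) applies Montel's theorem with a diagonal extraction to produce a holomorphic limit $F$ along a subsequence $F_{n_k}$, and arranges simultaneously that $\mathcal N(F_{n_k})\to f$ pointwise $\Op$-a.e.\ on $bD$. Stage~(III) identifies $\mathcal N(F)=f$ a.e., which splits into the easy half $\mathcal N(F)\le f$ and the hard half $\mathcal N(F)\ge f$.

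For Stage~(I), fix $K\Subset D$. The geometry of the approach regions $\Gamma_\alpha(\xi)$ attached to a bounded, $C^2$ strongly pseudoconvex domain ensures that for every $z\in K$ there is a Borel set $E_z\subset bD$ of induced Lebesgue measure $\sigma(E_z)\ge c_K>0$, contained in a common boundary ball $B_K$, with $z\in\Gamma_\alpha(\xi)$ for every $\xi\in E_z$. Hence $|F_n(z)|\le\mathcal N(F_n)(\xi)$ on $E_z$, and H\"older's inequality with the weight splitting $1=\phi_p^{1/p}\phi_p^{-1/p}$ yields
\[
|F_n(z)|\,\sigma(E_z)\,\le\,\int_{E_z}\!\mathcal N(F_n)\,d\sigma\,\le\,\|\mathcal N(F_n)\|_{L^p(bD,\Op)}\Bigl(\int_{B_K}\!\phi_p^{-1/(p-1)}\,d\sigma\Bigr)^{1/p'},
\]
where the last factor is finite by the $A_p$-condition applied to $B_K$. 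Since $\|\mathcal N(F_n)\|_{L^p(bD,\Op)}$ is bounded in $n$, one obtains $\sup_K|F_n|\le C_K$ uniformly in $n$. Montel's theorem and a diagonal extraction over an exhaustion $K_j\uparrow D$ then produce a subsequence $F_{n_k}\to F$ locally uniformly on $D$, with $F$ holomorphic; a further extraction along which $\mathcal N(F_{n_k})\to f$ a.e.\ is obtained from the $L^p$-convergence of $\mathcal N(F_n)$ to $f$.

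The easy half of Stage~(III) is then immediate: for any such $\xi$ and any $z\in\Gamma_\alpha(\xi)$,
\[
|F(z)|=\lim_k|F_{n_k}(z)|\le\liminf_k\mathcal N(F_{n_k})(\xi)=f(\xi),
\]
so taking the supremum in $z\in\Gamma_\alpha(\xi)$ gives $\mathcal N(F)(\xi)\le f(\xi)$ for $\Op$-a.e.\ $\xi$.

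The main obstacle is the reverse inequality. Local uniform convergence alone forces $\mathcal N$ to be only lower semicontinuous, which is the wrong direction; mass of $\mathcal N(F_n)$ could in principle be concentrated in the boundary layer of the approach regions and fail to be captured by the pointwise limit $F$. To bridge this gap I would pass through non-tangential boundary traces: for a strongly pseudoconvex $C^2$ domain, the classical Fatou-type theorem of Stein (see \cite{STE2}) attaches to each $F_n$ a trace $\tilde F_n$ on $bD$ with $|\tilde F_n|\le\mathcal N(F_n)$ and the equivalence $\mathcal N(F_n)\asymp M_\sigma(\tilde F_n)$, where $M_\sigma$ is the Hardy--Littlewood maximal operator on the space of homogeneous type $(bD,\sigma)$. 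Since $\Op\in A_p(bD)$, $M_\sigma$ is bounded on $L^p(bD,\Op)$. Combining this $A_p$-boundedness with a weak-$L^p$ compactness argument---using local uniform convergence of $F_{n_k}$ to $F$ and a reproducing-type formula to identify the weak-$L^p$ limit of $\tilde F_{n_k}$ with the boundary trace of $F$, then upgrading weak to strong convergence via uniform convexity of $L^p$ together with the matching of norms---one obtains $\|\mathcal N(F)\|_{L^p(\Op)}=\|f\|_{L^p(\Op)}$, which paired with $\mathcal N(F)\le f$ forces $\mathcal N(F)=f$ $\Op$-a.e.
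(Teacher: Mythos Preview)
Your approach differs substantially from the paper's, and Stage~(III) has a genuine gap.

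The paper does not build $F$ via Montel at all. Its key step is the embedding
\[
H^p(bD,\Omega_p)\ \hookrightarrow\ H^{p_0}(bD,\sigma)\qquad\text{for some }1<p_0<p,
\]
obtained from the self-improvement $\psi_p\in A_{p_1}$ with $p_1<p$ together with H\"older's inequality (exponents $p/p_0$ and $p/(p-p_0)$). This is precisely the estimate behind your Stage~(I), but promoted to a global $L^{p_0}(bD,\sigma)$ bound on $\mathcal N(F_n)$ rather than a pointwise sup bound on compacta. The problem is thereby transferred to the \emph{unweighted} Hardy space $H^{p_0}(bD,\sigma)$, where existence of admissible boundary values, the Fatou theorem, and closedness are classical; the limiting $F$ comes from that theory, and then $\mathcal N(F)\lesssim M_\sigma(F^b)$ combined with the $A_p$-boundedness of $M_\sigma$ on $L^p(bD,\Omega_p)$ places $F$ back in $H^p(bD,\Omega_p)$.

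The flaw in your scheme is that the locally-uniform Montel limit can be the \emph{wrong} candidate. Take any nonconstant inner function $I$ on the ball with $I(0)=0$ (such exist by Aleksandrov) and set $F_n:=I^n$. Since $|I|<1$ on $D$ one has $F_n\to 0$ locally uniformly, while $|F_n^b|=1$ a.e.\ on $bD$, so $\mathcal N(F_n)\equiv 1=:f$ a.e.\ and in every $L^p(bD,\Omega_p)$. Your construction yields $F=0$ and $\mathcal N(F)=0\neq f$; the correct witness for the proposition is the constant function $1$, which is not the Montel limit of any subsequence of $\{F_n\}$. Tracing your ``hard half'' in this example exposes where it breaks: the boundary traces $\tilde F_{n_k}=(I^b)^{n_k}$ have constant $L^p(\Omega_p)$ norm, so if they converge weakly to the boundary trace of your $F$ (namely $0$), the norms do \emph{not} match and the uniform-convexity upgrade from weak to strong convergence collapses; if instead the weak limit is nonzero, your identification of it with $\tilde F=0$ via a ``reproducing-type formula'' fails. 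Either way the chain ``weak compactness $\to$ reproducing formula $\to$ matching of norms'' cannot close, and the equivalence $\mathcal N\asymp M_\sigma(\tilde F)$ only delivers $\|\mathcal N(F)\|\lesssim\|f\|$ with implicit constants, never the equality you need to force $\mathcal N(F)=f$ pointwise.
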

The proof relies on the following observation, which is of independent interest: for any $1\leq\,p<\infty$ and any $A_p$-measure $\Op$ with density $\pp$, there is $p_0 = p_0(\Op)\in (1, p)$ such that
\begin{equation}\label{E:Hp-incl}
H^p(bD, \Op)\subset H^{p_0}(bD, \sigma) \ \ \text{with}\ \ \|F\|_{H^{p_0}(bD, \sigma)}\leq C_{\Op, D}
\|F\|_{H^p(bD, \Op)},
\end{equation}
where
\begin{equation*}
C_{\Op, D} = \bigg(\,\,\int\limits_{bD}\!\pp(w)^{-\frac{p_0}{p-p_0}}\,d\sigma(w)\bigg)^{\!\!\!\frac{p-p_0}{pp_0}}.
\end{equation*}
\vskip0.1in
On the other hand in the context of $A_p$-measures the notion of Hilbert space
 is meaningful only in relation to $A_2$-measures (that is for $p=2$), hence Cauchy--Szeg\H o projections can be associated only to such measures:
on account of Proposition \ref{P:Hardy-def-NT}, for any $A_2$-measure $\Ot$ there exists a unique, orthogonal projection: $$\SopO: L^2(bD, \Ot)\to H^2(bD, \Ot)$$ 
which is naturally bounded on $L^2(bD, \Ot)$ with minimal norm $\|\SopO\| = 1$.

\vskip0.1in

\subsection{Further results} 
The proof of Theorem \ref{T:Szego-for-A_p} also requires
quantitative results for the Cauchy Leray integrals $\{\EuScript C_\epsilon\}_\epsilon$ that extend the scope of the earlier works \cite{LS2017} and \cite{DLLWW} from Leray Levi-like measures, to $A_p$-measures: these are stated in Theorem
\ref{T:5.1}
 and Proposition \ref{prop cancellation}.

\subsection{Organization of this paper.} In Section \ref{S:2} we recall the necessary background and give the proof of Proposition \ref{P:Hardy-def-NT}.  All the quantitative results pertaining to the Cauchy--Leray integral are collected in 
 in Section \ref{S:3}. 
 Theorem \ref{T:Szego-for-A_p}
  is proved in 
 Section \ref{S:4}.

\vskip0.1in

\section{Preliminaries and proof of Proposition \ref{P:Hardy-def-NT}}\label{S:2}
\setcounter{equation}{0}

In this section we introduce notations and recall certain results from  \cite{DLLWW, LS2017} that will be used throughout this paper. We will henceforth assume that $D\subset\mathbb C^n$ is 
a bounded, strongly pseudoconvex domain of class $C^2$; that is, there is
$\rho\in C^2(\mathbb C^n, \mathbb R)$ which is strictly plurisubharmonic and such that
$D=\{z\in\mathbb C^n: \rho(z)<0\}$ and 
$bD=\{w\in\mathbb C^n: \rho(w)=0\}$ with $\nabla \rho(w)\not=0$ for all $w\in bD$.
 (We refer to such $\rho$ as a {\em defining function for $D$}; see e.g., \cite{Ra} for the basic properties of defining functions. Here we assume that one such $\rho$ has been fixed once and for all.) We will throughout make use of the following abbreviated notations:
 $$
\|T\|_p\ \equiv\ \|T\|_{L^p(bD, d\mu)\to L^p(bD, d\mu)},\quad\mbox{and}\quad  \|T\|_{p, q} \ \equiv\ \|T\|_{L^p(bD, d\mu)\to L^q(bD, d\mu)} 
 $$
 where the operator $T$ and the measure $\mu$ will be clear from context.
\vskip0.1in

\noindent $\bullet$ {\em The Levi polynomial and its variants.}\quad Define
$$ \mathcal L_0(w, z) := \langle \partial\rho(w),w-z\rangle -{1\over2} \sum_{j,k} {\partial^2\rho(w) \over \partial w_j \partial w_k} (w_j-z_j)(w_k-z_k),  $$
where $\partial \rho(w)=({\partial\rho\over\partial w_1}(w),\ldots, {\partial\rho\over\partial w_n}(w))$
and we have used the notation $\langle\eta,\zeta\rangle=\sum_{j=1}^n\eta_j\zeta_j$ for $\eta=(\eta_1,\ldots, \eta_n), \zeta=(\zeta_1,\dots,\zeta_n)\in\mathbb C^n$.
The strict plurisubharmonicity of $\rho $ implies that
$$  2\operatorname{ Re} \mathcal L_0(w, z) \geq -\rho(z)+c |w-z|^2,  $$
for some $c>0$, whenever $w\in bD$ and $z\in \bar D$ is sufficiently close to $w$.
We next define
\begin{align}\label{g0}
    g_0(w,z) := \chi \mathcal L_0+ (1-\chi) |w-z|^2
\end{align}
where $\chi=\chi(w,z)$ is a $C^\infty$-smooth cutoff function with $\chi=1$ when $|w-z|\leq \mu/2$ and $\chi=0$ if $|w-z|\geq \mu$.
Then for $\mu$ chosen sufficiently small (and then kept fixed throughout), we have that
\begin{equation}\label{E:c}
 \operatorname{ Re}g_0(w,z)\geq c(-\rho(z)+ |w-z|^2)
 \end{equation}
for $z$ in $\bar D$ and $w$ in $bD$, with $c$  a positive constant; we will refer to $g_0(w, z)$ as {\em the modified Levi polynomial}.
Note that $g_0(w, z)$ is polynomial in the variable $z$,  whereas in the variable $w$ it has no smoothness beyond mere continuity. To amend for this lack of regularity, 
 for each $\epsilon>0$ one considers a variant $g_\epsilon$ defined as follows.  Let $\{\tau_{jk}^\epsilon(w)\}$ be an $n\times n$-matrix  of $C^1$ functions such that
$$\sup_{w\in bD}\Big|{\partial^2\rho(w)\over\partial w_j\partial w_k}- \tau_{jk}^\epsilon(w)\Big|\leq\epsilon,\quad 1\leq j,k\leq n.$$
Set
\begin{align}\label{cepsilon}
c_\epsilon:=\sup_{w\in bD, 1\leq j,k\leq n} \big|\nabla \tau_{jk}^\epsilon(w)\big|.
\end{align}
For the convenience of our statement and proof, we may choose those $\{\tau_{jk}^\epsilon(w)\}$ such that 
\begin{align}\label{cepsilon bound}
c_\epsilon\lesssim \epsilon^{-1}.
\end{align}
where the implicit constant is independent of $\epsilon$.
We also set
$$ \mathcal L_\epsilon(w, z) = \langle \partial\rho(w),w-z\rangle -{1\over2} \sum_{j,k}\tau_{jk}^\epsilon(w) (w_j-z_j)(w_k-z_k),  $$
and define
$$    g_\epsilon(w,z) = \chi \mathcal L_\epsilon+ (1-\chi) |w-z|^2, \quad z,w\in\mathbb C^n.  $$
Now $g_\epsilon$ is of class $C^1$ in the variable $w$, and
$$\left|g_0(w,z)-g_\epsilon(w,z) \right|\lesssim \epsilon |w-z|^2,\quad w\in bD, z\in \overline{D}.$$
We assume that $\epsilon$ is sufficiently small (relative to the constant $c$ in \eqref{E:c}), and this gives that
\begin{equation}\label{E:epsilon-0}
\left|g_0(w,z) \right|\leq  \left|g_\epsilon(w,z)\right|\leq \tilde C\left|g_0(w,z) \right|,\quad w, z\in bD
\end{equation}
where the constants $C$ and $\tilde C$ are independent of $\epsilon$; see \cite[Section 2.1]{LS2017}.

\vskip0.1in

\noindent $\bullet$ {\em The Leray--Levi measure for $bD$.}\quad 
Let $j^*$ denote the pullback under the inclusion $$j:bD\hookrightarrow\mathbb C^n.$$ Then  the linear functional
\begin{align}\label{lambda}  
f\mapsto {1\over (2\pi i)^n} \int\limits_{bD} f(w) j^*(\partial \rho \wedge (\bar\partial \partial \rho)^{n-1})(w)=: \int\limits_{bD} f(w)d\lambda(w) 
\end{align}
where $f\in C(bD)$, defines  a measure $\lambda$ with positive density given by
$$   d\lambda(w) = {1\over (2\pi i)^n} j^*(\partial\rho \wedge (\bar\partial \partial\rho)^{n-1}) (w).  $$
We point out that the definition of $\lambda$ depends upon the choice of defining function for $D$, which here has been fixed once and for all; hence we refer to $\lambda$ as ``the'' {\em Leray--Levi measure}.
\vskip0.1in

\noindent $\bullet$ {\em A space of homogeneous type.}\quad Consider the function 
\begin{equation}\label{E:quasi-dist}
{\tt d}(w,z) := |g_0(w,z)|^{1\over2},\quad w, z\in bD.
\end{equation} 
It is known \cite[(2.14)]{LS2017} that
$$
|w-z|\lesssim {\tt d}(w, z)\lesssim |w-z|^{1/2},\quad w, z\in bD
$$
and from this it follows that the space of H\"older-type functions \cite[(3.5)]{LS2017}:
\begin{equation}\label{E:Holder}
|f(w) - f(z)|\lesssim {\tt d}(w, z)^\alpha\quad \mbox{for some}\ 0<\alpha\leq 1\ \ \mbox{and for all}\ w, z\in bD
\end{equation}
is dense in $L^p(bD, \omega)$, $1<p<\infty$ for any Leray Levi-like measure see \cite[Theorem 7]{LS2017}.

\vskip0.1in

It follows from \eqref{E:epsilon-0} that
\begin{align}\label{gd}
{\color{black} \tilde C{\tt d}(w,z)^{2}\leq |g_\epsilon(w,z)|\leq C {\tt d}(w,z)^{2}, \quad w, z\in bD}
 \end{align}
for any $\epsilon$ sufficiently small. It is shown in \cite[Proposition 3]{LS2017} that  ${\tt d}(w, z)$ is a quasi-distance: there exist constants $A_0>0$ and  $C_{\tt d}>1$ such that for all $w,z,z'\in bD$,
\begin{align}\label{metric d}
\left\{
                \begin{array}{ll}
                  1)\ \ {\tt d}(w,z)=0\quad {\rm iff}\quad w=z;\\[5pt]
                  2)\ \ A_0^{-1} {\tt d}(z,w)\leq  {\tt d}(w,z) \leq A_0 {\tt d}(z,w);\\[5pt]
                  3)\ \ {\tt d}(w,z)\leq C_{\tt d}\big( {\tt d}(w,z') +{\tt d}(z',z)\big).
                \end{array}
              \right.
\end{align}

\smallskip

Letting  $ B_r(w) $ denote the  boundary balls  determined via the quasi-distance ${\tt d }$,
\begin{align}\label{ball} 
B_r(w) :=\{ z\in bD:\ {\tt d}(z,w)<r \}, \quad {\rm where\ } w\in bD,
\end{align} 
we have that
\begin{align}\label{E:omegab}
c_\omega^{-1} r^{2n}\leq \omega\big(B_r(w) \big)\leq c_\omega r^{2n},\quad 0<r\leq 1,
\end{align}
for some $c_\omega>1$, see \cite[p. 139]{LS2017}.
It follows that the triples $\{bD, {\tt d}, \omega\}$, for any Leray Levi-like measure $\omega$,  are spaces of homogeneous type, where the measures $\omega$ have the doubling property:
{
\begin{lemma}\label{measure lambda}
The Leray Levi-like measures $\omega$ on $bD$ are doubling, i.e., there is a positive constant $C_\omega$ such that for all $x\in bD$ and $0<r\leq1$,
$$ 0<\omega(B_{2r}(w))\leq C_\omega\omega(B_{r}(w))<\infty. $$
Furthermore, there exist constants
$\epsilon_\omega\in(0,1)$ and $C_\omega>0$ such that 
$$ \omega( B_r(w)\backslash B_r(z) ) +  \omega( B_r(z)\backslash B_r(w) ) \leq C_\omega \left( { {\tt d}(w,z) \over r}  \right)^{\!\!\epsilon_\omega}   $$
for all $w,z\in bD$ such that  ${\tt d}(w,z)\leq r\leq1$.
\end{lemma}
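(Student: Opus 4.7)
The doubling estimate is immediate from the Ahlfors-$2n$-regularity recorded in \eqref{E:omegab}: indeed $\omega(B_{2r}(w)) \leq c_\omega (2r)^{2n} \leq 2^{2n}c_\omega^2\,\omega(B_r(w))$ for every $w \in bD$ and every $0 < r \leq 1$, which gives the doubling constant $C_\omega := 2^{2n}c_\omega^2$ with no further ingredient.

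For the symmetric-difference inequality my plan is first to reduce to the induced Lebesgue measure $\sigma$ via the identity $\omega = \Lambda\sigma$ with $\Lambda \in C(bD)$ sandwiched between positive constants, and then to dispose of the trivial regime $r \leq {\tt d}(w,z)$, where $({\tt d}(w,z)/r)^{\epsilon_\omega} \geq 1$ while the left-hand side is bounded by $2c_\omega r^{2n} \leq 2c_\omega$. By \eqref{metric d}(2) it further suffices to estimate $\sigma(B_r(w) \setminus B_r(z))$ alone. The non-trivial regime is therefore ${\tt d}(w,z) \ll r \leq 1$.

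In this regime the plan proceeds in two steps. First, I would establish a quantitative modulus-of-continuity estimate for $g_0$ in its first argument: using the $C^1$ approximant $g_\epsilon$ with gradient controlled by \eqref{cepsilon bound}, the equivalence \eqref{E:epsilon-0}--\eqref{gd}, and the comparability $|w-z| \lesssim {\tt d}(w,z) \lesssim |w-z|^{1/2}$, optimizing in $\epsilon$ should yield a bound of the form $\bigl|\,|g_0(u,w)|-|g_0(u,z)|\,\bigr| \lesssim {\tt d}(w,z)^{\epsilon_\omega}\, {\tt d}(u,w)^{2-\epsilon_\omega}$ for some $\epsilon_\omega \in (0,1)$ and all $u,w,z \in bD$ sufficiently close. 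Taking square roots, this forces any $u \in B_r(w)\setminus B_r(z)$ to satisfy ${\tt d}(u,w) \geq r\bigl(1 - C({\tt d}(w,z)/r)^{\epsilon_\omega}\bigr)^{1/2}$, so that the symmetric difference sits inside a thin shell $\{u: r-\eta_r \leq {\tt d}(u,w) < r\}$ of thickness $\eta_r \lesssim r\,({\tt d}(w,z)/r)^{\epsilon_\omega}$. Second, to estimate the $\sigma$-measure of this shell — which Ahlfors regularity alone cannot do, the upper and lower constants in \eqref{E:omegab} being different — I would use the $C^2$-regularity of $\rho$ (equivalently, the $C^1$-regularity of $g_\epsilon$) to show that each level set $\{u : |g_0(u,w)| = s\}$ with $s \approx r^2$ is locally a $C^1$ hypersurface of $bD$ with uniformly bounded geometry, and then apply a coarea/tubular-neighbourhood argument to obtain $\sigma(\{u:{\tt d}(u,w)\in [r-\eta_r, r)\}) \lesssim r^{2n-1}\eta_r$. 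Combined with $r \leq 1$ this yields $\sigma(B_r(w)\setminus B_r(z)) \lesssim r^{2n}({\tt d}(w,z)/r)^{\epsilon_\omega} \leq ({\tt d}(w,z)/r)^{\epsilon_\omega}$, as required.

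The main obstacle is the first step: the quasi-triangle inequality alone is far too coarse because its constant $C_{\tt d}$ does not approach $1$ on small scales, so one must instead exploit the explicit polynomial structure of $g_0$ in the variable $z$ together with the $C^2$-regularity of $\rho$ (entering through the $\epsilon$-dependent Lipschitz bound \eqref{cepsilon bound} and its optimization against the vanishing gap $|g_0 - g_\epsilon|$) to capture a positive Hölder exponent $\epsilon_\omega$. Once the modulus-of-continuity estimate is in hand, the second step is a routine smooth-manifold volume computation.
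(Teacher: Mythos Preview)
Your doubling argument is exactly the paper's: both reduce the first claim to the two-sided bound \eqref{E:omegab} in one line.

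For the symmetric-difference estimate you go far beyond the paper, whose entire proof is the single sentence ``The proof is an immediate consequence of \eqref{E:omegab}.'' You are right to be uneasy with that: Ahlfors regularity for a quasi-metric does not by itself control annuli or symmetric differences (the upper and lower constants in \eqref{E:omegab} differ, so $\omega(B_r(w))-\omega(B_{r-s}(w))$ need not vanish with $s$). Some additional structure of $bD$ and ${\tt d}$ is required, and the authors are presumably invoking \cite{LS2017} or standard regularity of the boundary balls rather than giving a self-contained argument. Your plan supplies exactly that missing content.

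Two adjustments, however. First, the variation relevant to the shell containment is in the \emph{second} slot of $g_0$, not the first: $u\in B_r(w)\setminus B_r(z)$ means $|g_0(u,w)|<r^2\le|g_0(u,z)|$, and $g_0(u,\cdot)$ is already quadratic in that variable, so $|g_0(u,w)-g_0(u,z)|\lesssim|w-z|\lesssim{\tt d}(w,z)$ comes for free, with no $g_\epsilon$-approximation or $\epsilon$-optimisation and without the specific H\"older form (with factor ${\tt d}(u,w)^{2-\epsilon_\omega}$) you conjecture. This already yields $B_r(w)\setminus B_r(z)\subset\{u:r^2-C\,{\tt d}(w,z)\le|g_0(u,w)|<r^2\}$. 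Second, the genuine difficulty is your coarea step: that shell is cut out by $u\mapsto|g_0(u,w)|$, which is merely continuous in $u$, and the multiplicative equivalence $|g_0|\approx|g_\epsilon|$ of \eqref{E:epsilon-0} does \emph{not} carry thin shells to thin shells. Your instinct to bring in $C^1$ regularity is correct, but it belongs here rather than in the first step; one route is to pass to a genuinely $C^1$ quasi-distance on $bD$ equivalent to ${\tt d}$ (as in \cite{LS2017}), another is to work directly with the sub-Riemannian structure of the compact $C^2$ hypersurface $bD$. Either works, but neither is the one-liner the paper suggests.
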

\begin{proof}
The proof is an immediate consequence of \eqref{E:omegab}.
\end{proof}
}

\vskip0.1in

\noindent $\bullet$\quad {\em A family of Cauchy-like integrals.}\quad 
 In \cite[Sections 3 and 4]{LS2017} an ad-hoc family $\{\mathbf C_\epsilon\}_\epsilon$ of Cauchy-Fantappi\`e integrals is introduced (each determined by the aforementioned denominators $g_\epsilon(w, z)$) whose corresponding boundary operators $\{\EuScript C_\epsilon\}_\epsilon$ play a crucial role in the analysis
 of $L^p(bD, \lambda)$-regularity of the Cauchy--Szeg\H o projection. 
 We henceforth refer to 
 $\{\Cine\}_\epsilon$ as the {\em Cauchy-Leray integrals}; we record here a few relevant points for later reference.
\vskip0.1in
\begin{itemize}
 \item[{\tt [i.]}] Each $\EuScript C_\epsilon$ 
 admits a primary decomposition in terms of an ``essential part'' $\EuScript C_\epsilon^\sharp$ and a ``remainder'' $\EuScript R_\epsilon$, which are used in the proof of the $L^2(bD, \omega)$-regularity of $\EuScript C_\epsilon$. However, at this stage the magnitude of the parameter $\epsilon$ plays no role
 (this is because of the ``uniform'' estimates \eqref{gd}) and we temporarily drop reference to $\epsilon$ and simply write  $\EuScript C$ in lieu of $\EuScript C_\epsilon$; $C(w, z)$ for $C_\epsilon(w, z)$, etc.. Thus
\begin{align}\label{C operator}
   \EuScript C =  \EuScript C^\sharp+ \EuScript R,  
\end{align}
with a corresponding decomposition for the integration kernels:
\begin{equation}\label{E: C kernel}
C(w,z)=C^\sharp(w,z)+ R(w,z).
\end{equation}
\vskip0.07in

\item[]
The ``essential'' kernel $C^\sharp(w,z)$  satisfies standard size and smoothness conditions that ensure
the boundedness of $\EuScript C^\sharp$ in $L^2(bD, \omega)$ by a $T(1)$-theorem for the space of homogenous type $\{bD, {\tt d}, \omega\}$. On the other hand, the ``remainder'' kernel $R(w,z)$ satisfies improved size and smoothness conditions granting
 that the corresponding operator $\EuScript R$ is bounded in $L^2(bD, \omega)$ by elementary considerations; see \cite[Section 4]{LS2017}.
\vskip0.1in

\item[{\tt [ii.]}] 
One then turns to the Cauchy--Szeg\H o projection, for which $L^2(bD, \omega)$-regularity is trivial but $L^p(bD, \omega)$-regularity, for $p\neq 2$, is not. 
It is in this stage that the size of $\epsilon$ in the definition of the Cauchy-type boundary operators of item  {\tt [i.]} is relevant.
It turns out that each $\EuScript C_\epsilon$ admits a further, ``finer'' decomposition into (another) ``essential'' part and (another) ``reminder'', which are obtained by truncating the integration kernel $C_\epsilon (w, z)$ by a smooth cutoff function $\chi_\epsilon^s(w, z)$ that equals 1 when ${\tt d}(w, z) <s = s(\epsilon)$. One has:
\begin{equation}\label{E:Cs-op}
\EuScript C_\epsilon = \EuScript C^s_\epsilon + \EuScript R^s_\epsilon
\end{equation}
where
\begin{equation}\label{E:Ess-small}
 \|  (\EuScript C^s_\epsilon)^\dagger -  \EuScript C^s_\epsilon\|_{p} \lesssim \epsilon^{1/2} M_p
 \end{equation}
for any $1<p<\infty$, where $\displaystyle{M_p = {p\over p-1} +p}$. Here and henceforth, the upper-script ``$\dagger$'' denotes adjoint in $L^2(bD, \omega)$ (hence $(\EuScript C^s_\epsilon)^\dagger$ is the adjoint of $\EuScript C^s_\epsilon$ in $L^2(bD, \omega)$); see \cite[Proposition 18]{LS2017}.
Furthermore $\EuScript R^s_\epsilon$ and $(\EuScript R^s_\epsilon)^\dagger$ are controlled by ${\tt d}(w, z)^{-2n+1}$ and therefore are easily seen to be bounded
\begin{equation}\label{E:RsBdd}
\EuScript R^s_\epsilon,\ \ (\EuScript R^s_\epsilon)^\dagger:
L^1(bD, \omega)\to L^\infty(bD, \omega),
\end{equation}
 see \cite[(5.2) and comments thereafter]{LS2017}. 

\end{itemize}

\vskip0.1in

\vskip0.1in

\noindent $\bullet$ {\em Muckenhoupt weights on $bD$.} \quad Let {$p\in(1, \infty)$. A non-negative locally integrable function $\psi$ is called an
\emph{$A_p(bD, \sigma)$-weight}, if
\begin{align*}
[\psi]_{A_p(bD, \sigma)}:=\sup_{B} \langle \psi\rangle_B\langle \psi^{1-p'}\rangle_B^{p-1}<\infty,
\end{align*}
where the supremum is taken over all balls $B$ in $bD$, and 
$ \displaystyle{\langle \phi\rangle_B:={1\over \sigma(B)}\int\limits_{B}\phi (z)\,d\sigma(z)}$. Moreover,  $\psi$ is called an
\emph{$A_1(bD, \sigma)$-weight} if $[\psi]_{A_1(bD, \sigma)}:=\inf\{C\geq0: \langle \psi\rangle_B \leq C \psi(x), \forall x\in B, \forall {\rm\ balls\ } B\in bD \} <\infty$.

\vskip0.1in
Similarly, one can define the \emph{$A_p(bD, \lambda)$-weight} for $1\leq p<\infty$.

As before, the identity \eqref{E:Leray Levi to sigma} grants that $$A_p(bD, \sigma) = A_p(bD, \lambda)\quad \text{with}\quad
[\psi]_{A_p(bD, \sigma)}\approx [\psi]_{A_p(bD, \lambda)},$$ thus we will henceforth simply write
$A_p(bD)$ and $[\psi]_{A_p(bD)}$. At times it will be more convenient to work with $A_p(bD, \lambda)$, and in this case we will refer to its members as
{\em $A_p$-like weights}.
\vskip0.1in

\begin{proof}[\textit{\textbf{Proof of Proposition \ref{P:Hardy-def-NT}}}]
To streamline notations, we write $\Omega$ for $\Omega_p$, and $\psi$ for $\psi_p$. It is clear that $H^p(bD,\Omega)$ is a subspace of $L^p(bD,\Omega)$, where the density function $\psi$ of $\Omega$ is in $A_p$.

Our first claim is that 
 for every $F\in H^p(bD,\Omega)$, 
{the non-tangential (also known as admissible) limit}
 $F^b(w)$ exists $\Omega$-a.e. $w\in bD$.  In fact, note that for $\psi\in A_p$, there exists $1<p_1<p$ such that $\psi$ is in $A_{p_1}$. Set $p_0=p/p_1$. Then it is clear that $1<p_0<p$. Let $P=p/p_0$ and $P'$ be the conjugate of $P$, which is $P'=p/(p-p_0)$.
So we get $\psi^{ 1-P'} = \psi^{-p_0/(p-p_0)} \in A_{P'}=A_{p/(p-p_0)}$.

Hence, for $F\in H^p(bD,\Omega)$,
\begin{align*}
\|F\|_{H^{p_0}(bD,\lambda)}&=\bigg(\int_{bD} |\mathcal N(F)(w)|^{p_0}\psi(w)^{p_0\over p} \psi(w)^{-{p_0\over p}} d\lambda(w)\bigg)^{1\over p_0}\\
&\leq \bigg(\int_{bD} |\mathcal N(F)(w)|^p\psi(w) d\lambda(w)\bigg)^{1\over p}\bigg(\int_{bD} \psi(w)^{-{p_0\over p-p_0}} d\lambda(w)\bigg)^{p-p_0\over pp_0}\\
&= \|F\|_{H^2(bD,\Omega)} \Big(\psi^{-{p_0\over 2-p_0}} (bD)\Big)^{p-p_0\over pp_0}.
\end{align*}
Since $\psi^{-p_0/(p-p_0)} \in A_{p/(p-p_0)}$ and $b D$ is compact we have that $\psi^{-{p_0\over p-p_0}} (bD)$ is finite.

Hence, we see that $ H^p(bD,\Omega)\subset  H^{p_0}(bD,\lambda)$. 
Thus
$F$ has admissible limit $F^b$ for $\lambda$--a.e. $z\in bD$ (\cite[Theorem 10]{STE2}), 
and hence $F$ has admissible limit $F^b$ for $\Omega$--a.e. $z\in bD$ since the measure $\Omega$ (with the density function $\psi$: $d\Omega(z)=\psi(z)d\lambda(z)$) is absolutely continuous. 
So the boundary function $F^b$ exists.

Next, and from the definition of the non-tangential maximal function $\mathcal N(F)$, we have that 
$$  |F^b(z)|\leq \mathcal N(F)(z)\quad {\rm for\ {\Omega}-a.e. \ } z\in bD. $$
Thus, $F^b\in L^p(bD,\Omega)$.
Also note that
{with the same methods of}
 \cite{STE2} {one can show that} 
$$   \mathcal N(F)(z)\lesssim  M(F^b)(z)\quad {\rm for\ {\Omega}-a.e. \ } z\in bD, $$
where $M(F^b)$ is the Hardy--Littlewood maximal function on the boundary $bD$.
Since the maximal function is bounded on $L^{p}(bD,\Omega)$, we obtain that 
$$ \|F\|_{ H^p(bD,\Omega) } = \| \mathcal N(F) \|_{L^p(bD,\Omega)} \lesssim \| M(F^b) \|_{L^p(bD,\Omega)}\leq 
C[\psi]_{A_p} \|F^b\|_{L^p(bD,\Omega)}.$$

Suppose  now that $\{F_n\}$ is a sequence in $H^p(bD,\Omega)$ and $f\in L^p(bD,\Omega)$
such that $$\|\mathcal N(F_n)-f\|_{L^p(bD,\Omega)}\to0. $$
Then, it is clear that 
$$\|\mathcal N(F_n)-f\|_{L^{p_0}(bD,\lambda)}\leq \|\mathcal N(F_n)-f\|_{L^{p}(bD,\Omega)} \Big(\psi^{-{p_0\over p-p_0}} (bD)\Big)^{p-p_0\over pp_0}\to0. $$
Since $H^{p_0}(D,\lambda)$ is a proper subspace of $L^{p_0}(bD,\lambda)$, then in particular $\{F_n\}_n$ is  in $H^{p_0}(D,\lambda)$ and $f$ is  in $L^{p_0}(bD,\lambda)$, 
we see that 
there is $F$ that is holomorphic in $D$ and such that 
$$
F^b(w) = f(w)\quad \lambda-a.e.\ w\in\ bD.
$$
Again, this implies that
$$
F^b(w) = f(w)\quad \Omega-a.e.\ w\in\ bD.
$$
  Moreover, invoking the weighted boundedness of the Hardy--Littlewood maximal function, we see that
$$
\|\mathcal N(F)\|_{L^{p}(bD, \Omega)}\lesssim \|f\|_{L^p(bD, \Omega)}.
$$

The proof of Proposition \ref{P:Hardy-def-NT} is complete.
\end{proof}

\section{Quantitative estimates for the Cauchy--Leray integral
}\label{S:3}
\setcounter{equation}{0}

As before, in the proofs of all statements in this section we adopt the shorthand $\Omega$ for $\Omega_p$, and $\psi$ for $\psi_p$.
\begin{theorem}\label{T:5.1} Let $D\subset \mathbb C^n$, $n\geq 2$, be  a bounded, strongly pseudoconvex domain of class $C^2$. Then the Cauchy-type integral $\EuScript C_\epsilon$ is bounded on $L^p(bD, \Op)$ for any $0<\epsilon<\epsilon(D)$, any $1<p<\infty$ and any $A_p$-measure $\Op$, 
  {with}
 \begin{align}\label{C quantitative bound}
 \|\EuScript C_\epsilon\|_{L^p(bD,\Op)\to L^p(bD,\Op)}\  \lesssim
 \, c_\epsilon\cdot [\Op]_{A_p}^{\max\{1,{1\over p-1}\}},
  \end{align}
where the implied constant depends on $p$ and $D$,  but is independent of  $\epsilon$ or $\Op$, and $c_\epsilon$ is the constant in \eqref{cepsilon}.
\end{theorem}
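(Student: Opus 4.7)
The plan is to recognize $\EuScript C_\epsilon$ as a Calder\'on--Zygmund operator on the space of homogeneous type $(bD, {\tt d}, \sigma)$ whose Calder\'on--Zygmund data (kernel size and regularity, and $L^2$-operator norm) are all controlled by a single factor of $c_\epsilon$, and then to apply the sharp weighted $A_p$ theorem for such operators.

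First I would quantitatively revisit the primary decomposition recorded in \eqref{C operator}, $\EuScript C_\epsilon = \EuScript C^\sharp_\epsilon + \EuScript R_\epsilon$, and check that the kernel $C_\epsilon(w,z) = C^\sharp_\epsilon(w,z) + R_\epsilon(w,z)$ satisfies the standard size bound $|C_\epsilon(w,z)| \lesssim \sigma(B_{{\tt d}(w,z)}(w))^{-1}$, inherited unchanged from \cite{LS2017}, together with the H\"older-type regularity in each of the two variables
\begin{equation*}
|C_\epsilon(w,z)-C_\epsilon(w',z)| + |C_\epsilon(z,w)-C_\epsilon(z,w')| \lesssim c_\epsilon \Bigl(\frac{{\tt d}(w,w')}{{\tt d}(w,z)}\Bigr)^{\!\eta} \frac{1}{\sigma(B_{{\tt d}(w,z)}(w))},
\end{equation*}
for some $\eta\in(0,1]$ and ${\tt d}(w,w') \leq {\tt d}(w,z)/2$. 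The extra factor $c_\epsilon$ is the \emph{only} $\epsilon$-dependent constant that enters the H\"older estimate, arising from differentiating $g_\epsilon$ in the variable $w$ via the bound $|\nabla \tau^\epsilon_{jk}(w)| \leq c_\epsilon$ in \eqref{cepsilon}. The accompanying bound $\|\EuScript C_\epsilon\|_{L^2(bD,\omega)\to L^2(bD,\omega)} \lesssim c_\epsilon$ is then obtained by applying the $T(1)$-theorem on the space of homogeneous type $\{bD, {\tt d}, \omega\}$ to the essential part $\EuScript C^\sharp_\epsilon$ (the cancellation conditions $\EuScript C^\sharp_\epsilon(1),(\EuScript C^\sharp_\epsilon)^\dagger(1)\in \mathrm{BMO}$ are already verified in \cite[Section 4]{LS2017}), combined with the elementary Schur-type bound for $\EuScript R_\epsilon$, whose kernel is pointwise dominated by ${\tt d}(w,z)^{-2n+1}$ and is therefore integrable in either variable.

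Once $\EuScript C_\epsilon$ is thus cast as a CZ operator on $(bD,{\tt d},\sigma)$ with CZ data of order $c_\epsilon$, the sharp $A_p$ bound for Calder\'on--Zygmund operators on spaces of homogeneous type (in the line of Hyt\"onen--Kairema, Nazarov--Reznikov--Volberg and Anderson--Vagharshakyan) delivers exactly \eqref{C quantitative bound}. Equivalently, one could first establish the linear $A_2$ bound $\|\EuScript C_\epsilon\|_{L^2(\Omega_2)\to L^2(\Omega_2)} \lesssim c_\epsilon[\Omega_2]_{A_2}$ and then invoke the quantitative Rubio de Francia extrapolation theorem \cite[Theorem 9.5.3]{Gra}, which is the route consistent with the general strategy of the paper. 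The main obstacle is the quantitative bookkeeping in Step~1: one must carefully track that $c_\epsilon$ is the \emph{only} $\epsilon$-dependent factor entering both the CZ data of $\EuScript C_\epsilon$ and the hypotheses of the $T(1)$ theorem, and in particular that neither the size estimates for $C^\sharp_\epsilon$ and $R_\epsilon$ nor the BMO norms of $\EuScript C^\sharp_\epsilon(1)$ and $(\EuScript C^\sharp_\epsilon)^\dagger(1)$ carry any hidden $\epsilon$-dependence beyond the explicit factor $c_\epsilon$.
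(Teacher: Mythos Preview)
Your treatment of the essential part $\EuScript C^\sharp_\epsilon$ coincides with the paper's: the kernel $C^\sharp_\epsilon(w,z)$ is a standard Calder\'on--Zygmund kernel on $(bD,{\tt d},\lambda)$ with regularity constant $c_\epsilon$ (the factor $c_\epsilon$ appearing precisely in the $w$-smoothness, condition b) in \eqref{gwz}), and the sharp weighted bound \cite{Hy0,Lacey0} yields \eqref{Csharp quantitative bound} directly. The gap is in your handling of $\EuScript R_\epsilon$. According to the estimates quoted from \cite{LS2017} (see \eqref{cr}), the remainder kernel $R_\epsilon(w,z)$ has the H\"older condition \emph{only in the variable $z$}; no $w$-smoothness is recorded. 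Consequently the full kernel $C_\epsilon = C^\sharp_\epsilon + R_\epsilon$ is not known to be a two-sided Calder\'on--Zygmund kernel, and you cannot feed $\EuScript C_\epsilon$ as a single object into the sharp $A_p$ theorem. Your Schur-type observation for $R_\epsilon$ (integrable kernel of order ${\tt d}^{-(2n-1)}$) delivers an unweighted $L^p$ bound but does not by itself produce a weighted one, so the step ``apply sharp $A_p$ to the full $\EuScript C_\epsilon$'' is where the argument breaks.

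The paper handles $\EuScript R_\epsilon$ separately via a Fefferman--Stein route: using only the one-sided $z$-smoothness (condition e) in \eqref{cr}) together with the improved size (condition d)), one proves the pointwise sharp-maximal estimate $(\EuScript R_\epsilon f)^\#(\tilde z)\lesssim \big(M(|f|^q)(\tilde z)\big)^{1/q}$ for some $q\in(1,p)$, and then combines the weighted Fefferman--Stein inequality with Buckley's bound for $M$ on $L^{p/q}(bD,\Omega_p)$ to obtain the required $[\Omega_p]_{A_p}^{\max\{1,1/(p-1)\}}$ dependence (with $\epsilon$-free constants, since $C_R$ in \eqref{cr} is independent of $\epsilon$). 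If you wish to salvage a more direct argument for $\EuScript R_\epsilon$, note that the improved size alone already gives the pointwise domination $|\EuScript R_\epsilon f(z)|\lesssim Mf(z)$ on the bounded set $bD$ (decompose into dyadic annuli and sum the geometric series $\sum 2^{-k}$), which immediately yields the weighted bound via Buckley; this is simpler than either your plan or the paper's sharp-maximal detour, and avoids the missing $w$-regularity entirely.
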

It follows that for any $A_2$-measure $\Ot$, the $L^2(bD, \Ot)$-adjoint
$\EuScript C_\epsilon^\spadesuit$ 
is also bounded on $L^p(bD, \Op)$ with same bound.
\begin{proof}

To begin with, we first recall that 
the Cauchy integral operator $\EuScript C_\epsilon$ can be split into the essential part and remainder, that is,
$\EuScript C_\epsilon=\EuScript C_\epsilon^\sharp+\EuScript R_\epsilon.$
Denote by  $ C^\sharp_\epsilon(w,z)$ and $R_\epsilon(w,z)$ the kernels of $\EuScript C_\epsilon^\sharp$ and $\EuScript R_\epsilon$, respectively.

Recall from \cite[(4.9), (4.18) and (4.19)]{LS2017} that $ C^\sharp_\epsilon(w,z)$ is a standard Calder\'on--Zygmund kernel, i.e. there exists a positive constant $\mathcal A_1$ such that for every $w,z\in bD$ with $w\not=z$,
\begin{align}\label{gwz}
\left\{
                \begin{array}{ll}
                  a)\ \ |C^\sharp_\epsilon(w,z)|\leq \mathcal A_1 {\displaystyle1\over\displaystyle{\tt d}(w,z)^{2n}};\\[5pt]
                  b)\ \ |C^\sharp_\epsilon(w,z) - C^\sharp_\epsilon(w',z)|\leq c_\epsilon\cdot \mathcal A_1 {\displaystyle {\tt d}(w,w')\over \displaystyle {\tt d}(w,z)^{2n+1} },\quad {\rm if}\ {\tt d}(w,z)\geq c{\tt d}(w,w');\\[5pt]
                  c)\ \ |C^\sharp_\epsilon(w,z) - C^\sharp_\epsilon(w,z')|\leq \mathcal A_1  {\displaystyle {\tt d}(z,z')\over \displaystyle {\tt d}(w,z)^{2n+1} },\quad {\rm if}\ {\tt d}(w,z)\geq c {\tt d}(z,z')
                \end{array}
              \right.
\end{align}
for an appropriate constant $c>0$, where ${\tt d}(z,w)$ is a quasi-distance suitably adapted to $bD$, and $c_\epsilon$ is the constant in \eqref{cepsilon}. And hence, the $L^p(bD)$ boundedness  ($1<p<\infty$) of $\EuScript C_\epsilon^\sharp$ follows from a version of the $T(1)$ Theorem.  Moreover, from \cite[(4.9)]{LS2017} we also get that there exists a positive constant $\mathcal A_2$ such that for every $w,z\in bD$ with $w\not=z$,
\begin{align}\label{gwz1}
                   |C^\sharp_\epsilon(w,z)|\geq \mathcal A_2 {\displaystyle1\over\displaystyle{\tt d}(w,z)^{2n}}.
\end{align}
However, the kernel $R_\epsilon(w,z)$ of $\EuScript R_\epsilon$ satisfies a size condition and a smoothness condition for only one of the variables as follows: there exists
a positive constant $C_R$ (independent of $\epsilon$) such that for every $w,z\in bD$ with $w\not=z$,
\begin{align}\label{cr}
\left\{
                \begin{array}{ll}
                  d)\ \ |R_\epsilon(w,z)|\leq C_R {\displaystyle1\over \displaystyle{\tt d}(w,z)^{2n-1}};\\[5pt]
                  e)\ \ |R_\epsilon(w,z)-R_\epsilon(w,z')|\leq C_R {\displaystyle{\tt d}(z,z')\over\displaystyle {\tt d}(w,z)^{2n}},\quad {\rm if\ }  {\tt d}(w,z)\geq c_R {\tt d}(z,z').
                \end{array}
              \right.
\end{align}

Since the kernel of $\EuScript C_\epsilon^\sharp$ is a standard Calder\'on--Zygmund kernel on $bD\times bD$, according to \cite{Hy0} (see also \cite{Lacey0}),  we can obtain that $\EuScript C_\epsilon^\sharp$ is bounded on  $L^p(bD,\Omega)$ with 
\begin{align}\label{Csharp quantitative bound}
\|\EuScript C_\epsilon^\sharp\|_{L^p(bD,\Omega)\to L^p(bD,\Omega)}
\lesssim  c_\epsilon\cdot \mathcal A_1  [\psi]_{A_p}^{\max\{1,{1\over p-1}\}},
\end{align}
where  $c_\epsilon$ and $\mathcal A_1$ are as in \eqref{gwz}.
Thus, it suffices to show that $\EuScript R_\epsilon$ is bounded on  $L^p(bD,\Omega)$ with the appropriate quantitative estimate.

To see this, we claim that for every $f\in L^p(bD,\Omega),\tilde z\in bD$, there exists a $q\in (1,p)$ such that
\begin{align}\label{Tsharp}
\Big|\left( {\EuScript R_\epsilon} (f)\right)^\#(\tilde z)\Big| \lesssim \Big(M(|f|^q)(\tilde z)\Big)^{1/q},
\end{align}
where $F^\#$ is the sharp maximal function of $F$ as recalled in Section 2, and the implied constant depends on $p$, $D$ and $c_R$.

We now show this claim. Since $bD$ is bounded, there exists $\overline C>0$ such that for any $B_r(z)\subset bD$ we have $r<\overline C$.
For any $\tilde z \in bD$, let us fix a ball $B_r=B_r(z_0)\subset bD$ containing $\tilde z$,  and let $z$ be any point of $B_r$.
Now take $j_0=\lfloor\log_2 {\overline C\over r}\rfloor+1$. Since ${\tt d}$ is a quasi-distance,
 there exists $i_0\in\mathbb Z^+$, independent of $z$, $r$, such that ${\tt d}(w,z)>c_R r$ whenever $w\in bD\setminus B_{2^{i_0} r}$, where $c_R$ is in \eqref{cr}.
We then write
\begin{align*}
{\EuScript R_\epsilon}(f)(z)
&=\big(\EuScript R_\epsilon\big(f\chi_{ bD \cap B_{2^{i_0} r}} \big)(z)
-\EuScript R_\epsilon\big(f\chi_{bD\setminus B_{2^{i_0} r}} \big)(z)=: I(z)+I\!I(z).
\end{align*}

For the term $I$, by using H\"older's inequality and the fact that $\EuScript R_\epsilon$ is bounded on $L^q(bD,\lambda)$, $1<q<\infty$, we have
 \begin{align*}
{1\over \lambda(B_r)}\int\limits_{B_r}|I(z)-I_{B_r}|d\lambda(z)
&\leq2\left( {1\over \lambda(B_r)}\int\limits_{B_r}\big|\EuScript R_\epsilon\big(f\chi_{bD\cap B_{2^{i_0} r}} \big)(z)\big|^qd\lambda(z) \right)^{1\over q}\\
&\lesssim \left( {1\over \lambda(B_r)}\int\limits_{bD\cap B_{2^{i_0} r}} |f(z)|^qd\lambda(z) \right)^{1\over q}\\
&\lesssim  \big(M(| f|^q)(\tilde z) \big)^{1\over q}.
\end{align*}

To estimate $I\!I$, observe that if $i_0\geq j_0$, then $bD\setminus B_{2^{i_0} r}=\emptyset$ and $| I\!I(z)-I\!I(z_0)|=0$.
If $i_0<j_0$, then we have
\begin{align*}
\left| I\!I(z)-I\!I(z_0)\right|
&=\big|\EuScript R_\epsilon\big(f\chi_{bD\setminus B_{2^{i_0} r}} \big)(z)
-\EuScript R_\epsilon\big(f\chi_{bD\setminus B_{2^{i_0} r}} \big)(z_0)\big|\\
&\leq \int\limits_{bD\setminus B_{2^{i_0} r}}\big| R_\epsilon(w,z)-R_\epsilon(w,z_0) \big|  |f(w)|d\lambda(w)\\
&\leq {\tt d}(z,z_0) \int\limits_{bD\setminus B_{2^{i_0} r}}{1\over {\tt d}(w, z_0)^{2n}} |f(w)|d\lambda(w)\\
&\leq  r \left( \int\limits_{bD\setminus B_{2^{i_0} r}}{1\over {\tt d}(w, z_0)^{2n}}d\lambda(w)\right)^{1\over q'} \left(\int\limits_{bD\setminus B_{2^{i_0} r}}{1\over {\tt d}(w, z_0)^{2n}}|f(w)|^qd\lambda(w) \right)^{1\over q}.
\end{align*}
 Since $bD$ is bounded,  we can obtain
\begin{align*}
\int\limits_{bD\setminus B_{2^{i_0} r}}{1\over {\tt d}(w, z_0)^{2n}}|f(w)|^qd\lambda(w)
&\leq\sum_{j=i_0}^{j_0}\,\,\int\limits_{2^j r\leq d(w, z_0)\leq 2^{j+1} r}{1\over {\tt d}(w, z_0)^{2n}}|f(w)|^qd\lambda(w) \\
&\leq \sum_{j=i_0}^{j_0}{1\over (2^j r)^{2n}}
\int\limits_{d(w, z_0)\leq 2^{j+1} r}|f(w)|^qd\lambda(w)\\
&\lesssim\sum_{j=i_0}^{j_0}  {1\over \lambda(B_{2^{j+1} r})}\int\limits_{B_{2^{j+1} r}}|f(w)|^qd\lambda(w)\\
&\lesssim  j_0 M(|f|^q)(\tilde z).
\end{align*}
Similarly, we have
\begin{align*}
 \int\limits_{bD\setminus B_{2^{i_0} r}}{1\over {\tt d}(w, z_0)^{2n}}d\lambda(w)
 &\lesssim\sum_{j=i_0}^{j_0}  {1\over \lambda(B_{2^{j+1} r})}\int\limits_{B_{2^{j+1} r}} d\lambda(w)\lesssim j_0.
\end{align*}
Thus, we get that 
$\left|I\!I(z)-I\!I(z_0)\right|\lesssim  r j_0 \left(M(|f|^q)(\tilde z)\right)^{1\over q}.
$
Therefore,
\begin{align*}
{1\over \lambda(B_r)}\int\limits_{B_r}\big| I\!I(z)-I\!I_{B_r}\big|d\lambda(z)
&\leq {2\over \lambda(B_r)}\int\limits_{B_r}\big| I\!I(z)-I\!I(z_0)\big|d\lambda(z)\lesssim r  j_0 \left(M(|f|^q)(\tilde z)\right)^{1\over q}\\
&\lesssim r  \left(\log_2\left( {\overline C\over r} \right)+1\right) \left(M(|f|^q)(\tilde z)\right)^{1\over q}\lesssim \left(M(|f|^q)(\tilde z)\right)^{1\over q},
\end{align*}
where the last inequality comes from the fact that $ r \log_2\left( {\overline C\over r}\right)$ is uniformly bounded.  Combining the estimates on $I$ and $I\!I$, we see that the claim \eqref{Tsharp} holds. 

We now prove that $\EuScript R_\epsilon$ is bounded on $L^p(bD,\Omega).$
In fact, for $f\in L^p(bD,\Omega)$
\begin{align}\label{R Lp}
\left\|{\EuScript R_\epsilon}(f)\right\|_{L^p(bD,\Omega)}^{p}&\leq C\Big(\Omega(bD)({\EuScript R_\epsilon}(f)_{bD})^p+\|{\EuScript R_\epsilon}(f)^\#\|_{L^p(bD,\Omega)}^{p}\Big)\\
&\leq C\Big(\Omega(bD)({\EuScript R_\epsilon}(f)_{bD})^p+\|\left(M(|f|^q)\right)^{1\over q}\|_{L^p(bD,\Omega)}^{p}\Big) \nonumber\\
&\lesssim \Omega(bD)({\EuScript R_\epsilon}(f)_{bD})^p+ [\psi]_{A_p}^{p\cdot \max\{1,{1\over p-1}\}}\|f\|_{L^p(bD,\Omega)}^{p},\nonumber
\end{align}
where the second inequality follows from \eqref{Tsharp}  and the  last inequality
follows from the fact that the Hardy--Littlewood function is bounded on $L^p(bD,\Omega)$. 
We point out that 
\begin{align}\label{R L1}
\Omega(bD)({\EuScript R_\epsilon}(f)_{bD})^p \lesssim \|f\|_{L^p(bD,\Omega)}^{p}.
\end{align}
In fact, by H\"older's inequality and that $\EuScript R_\epsilon$ is bounded on $L^q(bD,\lambda)$, $1<q<\infty$, we have
\begin{align*}
\Omega(bD)({\EuScript R_\epsilon}(f)_{bD})^p
&\leq  \Omega(bD) \left({1\over \lambda(bD)}\int\limits_{bD} \big|\EuScript R_\epsilon(f)(z)\big|^q  d\lambda(z)\right)^{p\over q}\\
&\lesssim \Omega(bD) \left({1\over \lambda(bD)}\int\limits_{bD} \big|f(z)\big|^q  d\lambda(z)\right)^{p\over q}\\
&\lesssim \Omega(bD) \inf_{z\in bD} \Big(M(|f|^q)(z)\Big)^{p\over q}\\
&\lesssim  \int\limits_{bD}\Big(M(|f|^q)(z)\Big)^{p\over q}d\Omega(z)\\
&\lesssim [\Omega]_{A_p}^{p\cdot \max\{1,{1\over p-1}\}}\|f\|_{L^p(bD,\Omega)}^{p}.
\end{align*}
Therefore, \eqref{R L1} holds, which, together with \eqref{R Lp}, implies that
$\EuScript R_\epsilon$ is bounded on $L^p(bD,\Omega)$ with the correct quantitative bounds. 
This, together with \eqref{Csharp quantitative bound}, gives that \eqref{C quantitative bound} holds. 
The proof of Theorem \ref{T:5.1} is complete.
\end{proof}

We now turn to the proof of the new cancellation \eqref{E:newcancellation-2}.

\begin{proposition}\label{prop cancellation}
For any fixed $0<\epsilon<\epsilon (D)$ as in \cite{LS2017}, there exists $s=s(\epsilon)>0$ such that  
   \begin{equation}\label{E:s-to-dag-s}
  \|(\EuScript C_\epsilon^s)^\dagger-\EuScript C_\epsilon^s\|_{L^p(bD, \Op)\to L^p(bD, \Op)}\
  {\lesssim}\ \epsilon^{1/2} [\Op]_{A_p}^{\max\{1,{1\over p-1}\}}
  \end{equation}
 \noindent  $\text{for any}\ 1<p<\infty \  \text{and for any} \ A_p \text{-measure},  \Op$ where the implied constant depends on $D$ and $p$ but is independent of $\Op$ and of $\epsilon$.
  As before, here  $(\EuScript C_\epsilon^s)^\dagger$ 
   denotes the adjoint
  in $L^2(bD, \omega)$.
  \end{proposition}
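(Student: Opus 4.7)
My plan is to re-examine the proof of the unweighted estimate \eqref{E:Ess-small} from \cite[Proposition 18]{LS2017} and upgrade each step to the weighted setting, mirroring the strategy employed in the proof of Theorem \ref{T:5.1}. The goal is to show that $T_\epsilon^s := (\EuScript C_\epsilon^s)^\dagger - \EuScript C_\epsilon^s$ is a Calder\'on--Zygmund operator on the space of homogeneous type $(bD, {\tt d}, \lambda)$ whose size and smoothness kernel constants are controlled by $\epsilon^{1/2}$ uniformly in $\epsilon < \epsilon(D)$, and then to invoke the sharp quantitative $A_p$ theorem of \cite{Hy0, Lacey0} already used in the proof of Theorem \ref{T:5.1}.

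Writing $\omega = \varphi\lambda$ with $\varphi \in C(bD)$ bounded above and below by positive constants, and recalling that the truncation $\chi_\epsilon^s$ is symmetric in its arguments, the kernel of $T_\epsilon^s$ with respect to $\lambda$ can be expressed as
$$K_\epsilon^s(w,z) = \chi_\epsilon^s(w,z)\left[\frac{\varphi(z)}{\varphi(w)}\overline{C_\epsilon(z,w)} - C_\epsilon(w,z)\right].$$
The crux of the argument is to establish standard Calder\'on--Zygmund size and smoothness bounds for $K_\epsilon^s$ on $(bD,{\tt d},\lambda)$ with kernel constants $\lesssim\epsilon^{1/2}$. The pointwise cancellation is driven by a near-hermiticity of the modified Levi polynomial $g_\epsilon(w,z)$, whose asymmetry is controlled in terms of $\epsilon$ and $|w-z|$, together with the modulus of continuity of $\varphi$; the truncation scale $s=s(\epsilon)$ is chosen (as in \cite{LS2017}) to optimally balance the pointwise cancellation against the support restriction, producing the desired $\epsilon^{1/2}$ gain. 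Since the paper has already recorded that $A_p(bD,\sigma) = A_p(bD,\lambda)$ with equivalent characteristics, the sharp weighted Calder\'on--Zygmund theorem transfers to the space $(bD,{\tt d},\lambda)$ and yields
$$\|T_\epsilon^s\|_{L^p(bD,\Op)\to L^p(bD,\Op)}\ \lesssim\ \epsilon^{1/2}\,[\Op]_{A_p}^{\max\{1,\,1/(p-1)\}}$$
for every $1<p<\infty$ and every $A_p$-measure $\Op$, with implicit constants independent of $\epsilon$ and of $\Op$.

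The main obstacle is the pointwise kernel analysis: one has to verify that the $\epsilon^{1/2}$ cancellation manifests in the pointwise CZ size and smoothness constants of $K_\epsilon^s$, and not merely in the operator $L^p(\omega)$-norm of $T_\epsilon^s$. This calls for a careful expansion of $g_\epsilon(w,z)^{-n}-\overline{g_\epsilon(z,w)}^{-n}$ exploiting the approximating matrices $\{\tau_{jk}^\epsilon(w)\}$ together with the bound \eqref{cepsilon bound}, balancing the $c_\epsilon\sim\epsilon^{-1}$ loss coming from the smoothness of $\tau^\epsilon_{jk}$ against the $\epsilon$-cancellation intrinsic to the denominator; this balance ultimately fixes the optimal choice $s=s(\epsilon)$. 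The continuous factor $\varphi(z)/\varphi(w)$ contributes only a bounded multiplicative perturbation that does not affect the key cancellation.
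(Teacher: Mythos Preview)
Your strategy of treating the whole difference $(\EuScript C_\epsilon^s)^\dagger-\EuScript C_\epsilon^s$ as a single Calder\'on--Zygmund operator with kernel constants $\lesssim\epsilon^{1/2}$ runs into two obstructions that the paper's proof is designed precisely to avoid.  First, the full Cauchy--Leray kernel $C_\epsilon(w,z)$ does \emph{not} satisfy standard CZ smoothness in both variables: only the essential part $C_\epsilon^\sharp$ does (see \eqref{gwz}), whereas the remainder $R_\epsilon$ carries smoothness only in $z$ (see \eqref{cr}).  Hence $\overline{C_\epsilon(z,w)}-C_\epsilon(w,z)$ is not a priori a CZ kernel, and your expansion of $g_\epsilon(w,z)^{-n}-\overline{g_\epsilon(z,w)}^{-n}$ addresses only the principal term.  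Second, the density $\varphi$ in $\omega=\varphi\lambda$ is merely continuous, with no quantitative modulus of continuity; therefore the factor $\varphi(z)/\varphi(w)$ is not ``a bounded multiplicative perturbation'' for the purposes of the \emph{smoothness} condition: when you test $|K_\epsilon^s(w,z)-K_\epsilon^s(w',z)|$ you must vary $\varphi(w)^{-1}$, and mere continuity is insufficient to recover a H\"older increment.

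This is exactly why the paper splits the operator into three pieces.  One first passes from the $L^2(\omega)$-adjoint to the $L^2(\lambda)$-adjoint via $(\EuScript C_\epsilon^s)^\dagger-\EuScript C_\epsilon^s=\big[(\EuScript C_\epsilon^s)^*-\EuScript C_\epsilon^s\big]-\varphi^{-1}[\varphi,(\EuScript C_\epsilon^s)^*]$, isolating the $\varphi$-dependence in a commutator that is handled by the cube-localization argument of \cite[Section 6.3]{LS2017} (using only uniform continuity of $\varphi$ on scale $s$) together with Theorem \ref{T:5.1}.  The bracket $(\EuScript C_\epsilon^s)^*-\EuScript C_\epsilon^s$ is then split further into $\EuScript A_\epsilon^s=\EuScript C_\epsilon^{\sharp,s}-(\EuScript C_\epsilon^{\sharp,s})^*$, which \emph{is} a genuine CZ operator with kernel constants $\lesssim\epsilon^{1/2}$ (here your approach applies verbatim), and $\EuScript B_\epsilon^s=\EuScript R_\epsilon^{\sharp,s}-(\EuScript R_\epsilon^{\sharp,s})^*$, whose kernel has the better size ${\tt d}^{-(2n-1)}$ but only one-sided smoothness, and is therefore handled via the sharp-maximal-function argument (as in the proof of Theorem \ref{T:5.1}), with the scale $s$ chosen so that $s\cdot\tilde c_\epsilon\approx\epsilon^{1/2}$.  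Your outline captures the $\EuScript A_\epsilon^s$ part correctly but overlooks the need for the other two pieces.
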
 
 
 \begin{remark}\label{remark power} 
 We point out that the term $\epsilon^{1/2}$ can be improved to $\epsilon^{\delta}$ for any fixed small $\delta>0$, according to \cite[Remark D]{LS2017} via choosing $\beta$ there arbitrarily close to 1.
  \end{remark}

\begin{proof}{Proof of Proposition \ref{prop cancellation}}

Recall from \cite[(5.7)]{LS2017}, that $\EuScript C_\epsilon^s$ is given by 
$$\EuScript C_\epsilon^s(f)(z) = \EuScript C_\epsilon \big( f(\cdot) \chi_s(\cdot,z) \big)(z), \quad z\in bD,$$
where $\chi_s(w,z)$ is the  cutoff function given by
$ \chi_s(w,z)=\widetilde \chi_{s,w}(z) \widetilde \chi_{s,z}(w)  $
with 
$$ \widetilde \chi_{s,w}(z) = \chi\Bigg( { {\rm Im} \langle \partial\rho(w) , w-z\rangle \over cs^2} + i { |w-z|^2\over cs^2 } \Bigg). $$
Here $\chi$ is a non-negative $C^1$-smooth function on $\mathbb C$ such that $\chi(u+iv)=1$ if $|u+iv|\leq 1/2$,
$\chi(u+iv)=0$ if $|u+iv|\geq 1$, and furthermore $|\nabla\chi(u+iv)|\lesssim 1$.

Then we also have the essential part and the remainder of $\EuScript C_\epsilon^s$, which are given by 
$$\EuScript C_\epsilon^{\sharp,s} (f)(z) = \EuScript C_\epsilon^{\sharp}  \big( f(\cdot) \chi_s(\cdot,z) \big)(z), \quad z\in bD,$$
and hence we have $ \EuScript C_\epsilon^s=\EuScript C_\epsilon^{\sharp,s}+ \EuScript R_\epsilon^{\sharp,s} $, where $\EuScript R_\epsilon^{\sharp,s}$ is the remainder. Further,  
$ \big(\EuScript C_\epsilon^s\big)^*=\big(\EuScript C_\epsilon^{\sharp,s}\big)^*+ \big(\EuScript R_\epsilon^{\sharp,s}\big)^* $.  
To prove \eqref{E:s-to-dag-s}, we 
note that $(\EuScript C_\epsilon^s)^\dagger = \varphi^{-1} (\EuScript C_\epsilon^s)^* \varphi$, where 
$\varphi$ is the density function of $\omega$ satisfying \eqref{E:LL-weights}. Thus,
$ (\EuScript C_\epsilon^s)^\dagger-\EuScript C_\epsilon^s =(\EuScript C_\epsilon^s)^*-\EuScript C_\epsilon^s
- \varphi^{-1} [\varphi, (\EuScript C_\epsilon^s)^*] $,
which gives
\begin{align*} 
& \|(\EuScript C_\epsilon^s)^\dagger-\EuScript C_\epsilon^s\|_{L^p(bD, \Omega_p)\to L^p(bD, \Omega_p)}\\
&\leq 
\|(\EuScript C_\epsilon^s)^*-\EuScript C_\epsilon^s\|_{L^p(bD, \Omega_p)\to L^p(bD, \Omega_p)} + \| \varphi^{-1} [\varphi, (\EuScript C_\epsilon^s)^*] \|_{L^p(bD, \Omega_p)\to L^p(bD, \Omega_p)}.
\end{align*}

Thus, we  claim that
   \begin{equation}\label{E:s-to-star-s}
  \|(\EuScript C_\epsilon^s)^*-\EuScript C_\epsilon^s\|_{L^p(bD, \Omega_p)\to L^p(bD, \Omega_p)}
  \leq \epsilon^{1/2} [\psi]_{A_p}^{\max\{1,{1\over p-1}\}}M (p,D, \omega)
  \end{equation}
  and that
   \begin{equation}\label{E: C phi}
  \| \varphi^{-1} [\varphi, (\EuScript C_\epsilon^s)^*] \|_{L^p(bD, \Omega_p)\to L^p(bD, \Omega_p)}
  \leq \epsilon^{1/2} [\psi]_{A_p}^{\max\{1,{1\over p-1}\}}M (p,D, \omega).
  \end{equation}
 
We first prove \eqref{E:s-to-star-s}.
To begin with, we write $ \EuScript C_\epsilon^s-\big(\EuScript C_\epsilon^s\big)^* =   \EuScript A_\epsilon^s+ \EuScript B_\epsilon^s$, where 
$$ \EuScript A_\epsilon^s=\EuScript C_\epsilon^{\sharp,s}- \big(\EuScript C_\epsilon^{\sharp,s}\big)^*\qquad{\rm and}\qquad  \EuScript B_\epsilon^s=\EuScript R_\epsilon^{\sharp,s}- \big(\EuScript R_\epsilon^{\sharp,s}\big)^*.$$

Let $A_\epsilon^s(w,z)$ be the kernel of $ \EuScript A_\epsilon^s$. Then, from \cite[(5.10) and (5.9)]{LS2017}, we see that
\begin{align*}
\left\{
                \begin{array}{ll}
                  1)\ \ |A_\epsilon^s(w,z)|\lesssim \epsilon {\displaystyle1\over\displaystyle{\tt d}(w,z)^{2n}}\qquad {\rm for\ any\ } s\leq s(\epsilon);\\[5pt]
                  2)\ \ |A_\epsilon^s(w,z) - A_\epsilon^s(w',z)|\lesssim \epsilon^{1/2} {\displaystyle {\tt d}(w,w')^{1/2}\over \displaystyle {\tt d}(w,z)^{2n+1/2} }\qquad {\rm for\ any\ } s\leq s(\epsilon);\\[5pt]
                \end{array}
              \right.
\end{align*}
for  ${\tt d}(w,z)\geq c{\tt d}(w,w')$ with an appropriate constant $c>0$. Moreover, 2) is true for $w$ and $z$ interchanged, that is, 
$$3)\  \ |A_\epsilon^s(w,z) - A_\epsilon^s(w,z')|\lesssim \epsilon^{1/2} {\displaystyle {\tt d}(z,z')^{1/2}\over \displaystyle {\tt d}(w,z)^{2n+1/2} },\quad {\rm for\ any\ } s\leq s(\epsilon).$$

From the kernel estimates, we see that $\epsilon^{-1/2} \EuScript A_\epsilon^s$, $s\leq s(\epsilon)$, is a standard Calder\'on--Zygmund operator on $bD\times bD$, according to \cite{Hy0} (see also \cite{Lacey0}),  we can obtain that $\epsilon^{-1/2} \EuScript A_\epsilon^s$ is bounded on  $L^p(bD,\Omega_p)$ with 
$$\|\epsilon^{-1/2} \EuScript A_\epsilon^s\|_{L^p(bD, \Omega_p)\to L^p(bD, \Omega_p)}
\leq [\psi]_{A_p}^{\max\{1,{1\over p-1}\}} M (p,D, \omega),$$
where the constant $M (p,D, \omega)$ depends only on $p$, $D$ and $\omega$.
Hence, we obtain that 
\begin{align}\label{A Lp}
\| \EuScript A_\epsilon^s\|_{L^p(bD, \Omega_p)\to L^p(bD, \Omega_p)}
\leq \epsilon^{1/2} [\psi]_{A_p}^{\max\{1,{1\over p-1}\}}M (p,D, \omega) \qquad {\rm for\ any\ } s\leq s(\epsilon).
\end{align}

Thus, it suffices to consider $\EuScript B_\epsilon^s$. Note that now the kernel $R_\epsilon^s(w,z)$ of $ \EuScript R_\epsilon^{\sharp,s}$ 
is given by
$$ R_\epsilon^s(w,z) = R_\epsilon(w,z) \chi_s(w,z), $$
where $R_\epsilon(w,z)$ is the kernel of $\EuScript R_\epsilon$ satisfying \eqref{cr}.
Thus, it is easy to see that $R_\epsilon^s(w,z)$ satisfies the following size estimate 
\begin{align*}
                  4)\ \ |R_\epsilon^s(w,z)|\leq \widetilde c_\epsilon {\displaystyle\chi_s(w,z)\over \displaystyle{\tt d}(w,z)^{2n-1}},
\end{align*}
where the constant $\widetilde c_\epsilon$ is large, depending on $\epsilon$.
For the regularity estimate of $R_\epsilon^s(w,z)$ on $z$, we get that for ${\tt d}(w,z)\geq c_{R} {\tt d}(z,z')$,
\begin{align*}
&|R_\epsilon^s(w,z)-R_\epsilon^s(w,z')|\\
&\leq | R_\epsilon(w,z)-R_\epsilon(w,z') |\cdot \chi_s(w,z) + |R_\epsilon(w,z')|\cdot |\chi_s(w,z)-\chi_s(w,z')|\\
&\leq \widetilde c_\epsilon {\displaystyle{\tt d}(z,z')\over \displaystyle{\tt d}(w,z)^{2n}}\cdot \chi_s(w,z) + \widetilde c_\epsilon {\displaystyle 1\over \displaystyle{\tt d}(w,z)^{2n-1}}\cdot |\chi_s(w,z)-\chi_s(w,z')|,
\end{align*}
where the last inequality follows from \eqref{cr}. Note that from the definition of $\chi_s(w,z)$, we get that
$|\chi_s(w,z)-\chi_s(w,z')|$ vanishes unless ${\tt d}(w,z)\approx {\tt d}(w,z')\approx s$. Moreover, under this condition, we further have 
$$|\chi_s(w,z)-\chi_s(w,z')|\lesssim {\displaystyle{\tt d}(z,z')\over \displaystyle{\tt d}(w,z)}.$$
Combining these estimates, we obtain that 
\begin{align*}
                  5)\ \ |R_\epsilon^s(w,z)-R_\epsilon^s(w,z')|\leq \widetilde c_\epsilon {\displaystyle{\tt d}(z,z')\ \chi_s(w,z)\over\displaystyle {\tt d}(w,z)^{2n}},\quad {\rm if\ }  {\tt d}(w,z)\geq c_{R} {\tt d}(z,z').
\end{align*}

We now consider the operator $\EuScript  T= s^{-1}\EuScript R_\epsilon^{\sharp,s}$. Then we claim that
\begin{align}\label{R Lp R/s claim}
\left\|{\EuScript T}(f)\right\|_{L^p(bD,\Omega_p)}\leq  \widetilde c_\epsilon [\psi]_{A_p}^{\max\{1,{1\over p-1}\}}M (p,D, \omega) \|f\|_{L^p(bD,\Omega_p)} .
\end{align}
To prove \eqref{R Lp R/s claim}, following the proof of Theorem \ref{T:5.1}, 
we first show that for every $f\in L^p(bD,\Omega_p),\tilde z\in bD$, there exists a $q\in (1,p)$ such that
\begin{align}\label{Tsharp R/s}
\Big|\left( \EuScript  T (f)\right)^\#(\tilde z)\Big| \lesssim \widetilde c_\epsilon\Big(M(|f|^q)(\tilde z)\Big)^{1/q},
\end{align}
where $F^\#$ is the sharp maximal function of $F$ as recalled in Section 2.

We now show \eqref{Tsharp R/s}. Since $bD$ is bounded, there exists $\overline C>0$ such that for any $B_r(z)\subset bD$ we have $r<\overline C$.
For any $\tilde z \in bD$, let us fix a ball $B_r=B_r(z_0)\subset bD$ containing $\tilde z$,  and let $z$ be any point of $B_r$.
 Since ${\tt d}$ is a quasi-distance,
 there exists $i_0\in\mathbb Z^+$, independent of $z$, $r$, such that ${\tt d}(w,z)>c_R r$ whenever $w\in bD\setminus B_{2^{i_0} r}$, where $c_R$ is in \eqref{cr}.
We then write
\begin{align*}
 \EuScript  T(f)(z)
&=  \EuScript  T\big(f\chi_{ bD \cap B_{2^{i_0} r}} \big)(z)
-\big(f\chi_{bD\setminus B_{2^{i_0} r}} \big)(z)=: I(z)+I\!I(z).
\end{align*}

For the term $I$, by using H\"older's inequality we have
 \begin{align*}
{1\over \lambda(B_r)}\int\limits_{B_r}|I(z)-I_{B_r}|d\lambda(z)
&\leq2\left( {1\over \lambda(B_r)}\int\limits_{B_r}\big| \EuScript  T\big(f\chi_{bD\cap B_{2^{i_0} r}} \big)(z)\big|^qd\lambda(z) \right)^{1\over q}\\
&\lesssim \widetilde c_\epsilon\left( {1\over \lambda(B_r)}\int\limits_{bD\cap B_{2^{i_0} r}} |f(z)|^qd\lambda(z) \right)^{1\over q}\\
&\lesssim \widetilde c_\epsilon \big(M(| f|^q)(\tilde z) \big)^{1\over q},
\end{align*}
where the second inequality follows from the boundedness in \cite[(4.22)]{LS2017} since the kernel of $\EuScript  T$
satisfies the conditions in \cite[(4.22)]{LS2017}.

To estimate $I\!I$,  by using condition 5), we have
\begin{align*}
\left| I\!I(z)-I\!I(z_0)\right|
&=\big|\EuScript T\big(f\chi_{bD \setminus B_{2^{i_0} r}} \big)(z)
-\EuScript T\big(f\chi_{bD\setminus B_{2^{i_0} r}} \big)(z_0)\big|\\
&\leq \int\limits_{bD\setminus B_{2^{i_0} r}} {\widetilde c_\epsilon\over s} {\displaystyle{\tt d}(z,z_0)\ \chi_s(w,z_0)\over\displaystyle {\tt d}(w,z_0)^{2n}} |f(w)|d\lambda(w)\\
&\leq \int\limits_{(bD\setminus B_{2^{i_0} r} ) \cap B_s} {\widetilde c_\epsilon\over s} {\displaystyle{\tt d}(z,z_0)\ \over\displaystyle {\tt d}(w,z_0)^{2n}} |f(w)|d\lambda(w),
\end{align*}
where $B_s:=B_s(z_0)$ and the last inequality follows from the property of the function $\chi_s(w,z_0)$. Thus, we see that if $2^{i_0}r\geq s$, then the last term is zero. So we just need to consider $2^{i_0}r<s$. In this case, we have
\begin{align*}
\left| I\!I(z)-I\!I(z_0)\right|
&\leq {\widetilde c_\epsilon\over s}\ {\tt d}(z,z_0) \int\limits_{B_s\setminus B_{2^{i_0} r}}{1\over {\tt d}(w, z_0)^{2n}} |f(w)|d\lambda(w)\\
&\leq  {\widetilde c_\epsilon\ r\over s} \left( \int\limits_{B_s\setminus B_{2^{i_0} r}}{1\over {\tt d}(w, z_0)^{2n}}d\lambda(w)\right)^{1\over q'} \left(\int\limits_{B_s\setminus B_{2^{i_0} r}}{1\over {\tt d}(w, z_0)^{2n}}|f(w)|^qd\lambda(w) \right)^{1\over q}.
\end{align*}
 We take $j_0=\lfloor\log_2 {\displaystyle s\over\displaystyle  r}\rfloor+1$.  Continuing the estimate:
\begin{align*}
\int\limits_{B_s\setminus B_{2^{i_0} r}}{1\over {\tt d}(w, z_0)^{2n}}|f(w)|^qd\lambda(w)
&\leq\sum_{j=i_0}^{j_0}\ \int\limits_{2^j r\leq d(w, z_0)\leq 2^{j+1} r}{1\over {\tt d}(w, z_0)^{2n}}|f(w)|^qd\lambda(w) \\
&\leq \sum_{j=i_0}^{j_0}{1\over (2^j r)^{2n}}
\int\limits_{d(w, z_0)\leq 2^{j+1} r}|f(w)|^qd\lambda(w)\\
&\lesssim\sum_{j=i_0}^{j_0}  {1\over \lambda(B_{2^{j+1} r})}\int\limits_{B_{2^{j+1} r}}|f(w)|^qd\lambda(w)\\
&\lesssim  j_0 M(|f|^q)(\tilde z).
\end{align*}
Similarly, we have
\begin{align*}
 \int\limits_{B_s\setminus B_{2^{i_0} r}}{1\over {\tt d}(w, z_0)^{2n}}d\lambda(w)
 &\lesssim\sum_{j=i_0}^{j_0}  {1\over \lambda(B_{2^{j+1} r})}\int\limits_{B_{2^{j+1} r}} d\lambda(w)\lesssim j_0.
\end{align*}
Thus, we get that 
$\left|I\!I(z)-I\!I(z_0)\right|\lesssim  \widetilde c_\epsilon {r\over s}  j_0 \left(M(|f|^q)(\tilde z)\right)^{1\over q}.
$
Therefore,
\begin{align*}
{1\over \lambda(B_r)}\int\limits_{B_r}\big| I\!I(z)-I\!I_{B_r}\big|d\lambda(z)
&\leq {2\over \lambda(B_r)}\int\limits_{B_r}\big| I\!I(z)-I\!I(z_0)\big|d\lambda(z)\lesssim \widetilde c_\epsilon {r\over s}    j_0 \left(M(|f|^q)(\tilde z)\right)^{1\over q}\\
&\lesssim \widetilde c_\epsilon {r\over s}    \left(\log_2\left( { s\over r} \right)+1\right) \left(M(|f|^q)(\tilde z)\right)^{1\over q}\lesssim  \widetilde c_\epsilon\left(M(|f|^q)(\tilde z)\right)^{1\over q},
\end{align*}
where the last inequality comes from the fact that $ A \left(\log_2( {1\over A})+1\right)$ is uniformly bounded for $A\in (0,1]$.

Combining the estimates on $I$ and $I\!I$, we see that the claim \eqref{Tsharp R/s} holds. We now prove that $\EuScript T$ is bounded on $L^p(bD,\Omega_p).$
In fact, for $f\in L^p(bD,\Omega_p)$
\begin{align}\label{R Lp R/s}
\left\|{\EuScript T}(f)\right\|_{L^p(bD,\Omega_p)}^{p}&\lesssim \Big(\Omega_p(bD)({\EuScript T}(f)_{bD})^p+\|{\EuScript T}(f)^\#\|_{L^p(bD,\Omega_p)}^{p}\Big)\\
&\lesssim \Big(\Omega_p(bD)({\EuScript T}(f)_{bD})^p+\widetilde c_\epsilon^p\|\left(M(|f|^q)\right)^{1\over q}\|_{L^p(bD,\Omega_p)}^{p}\Big) \nonumber\\
&\leq \Omega_p(bD)({\EuScript T}(f)_{bD})^p+ \widetilde c_\epsilon^p [\psi]_{A_p}^{p\cdot \max\{1,{1\over p-1}\}}M (p,D, \omega) \|f\|_{L^p(bD,\Omega_p)}^{p},\nonumber
\end{align}
where the second inequality follows from \eqref{Tsharp R/s}  and the  last inequality
follows from the fact that the Hardy--Littlewood function is bounded on $L^p(bD,\Omega)$ with the constant $M (p,D, \omega)$ depending only on $p$, $D$ and $\omega$. 
We point out that 
\begin{align}\label{R L1 R/s}
\Omega_p(bD)({\EuScript T}(f)_{bD})^p \leq \widetilde c_\epsilon [\Op]_{A_p}^{p\cdot \max\{1,{1\over p-1}\}} M (p,D, \omega)\|f\|_{L^p(bD,\Omega_p)}^{p}.
\end{align}

In fact, by H\"older's inequality and the fact that $\EuScript T$ is bounded on $L^q(bD,\lambda)$ for all $1<q<\infty$ (from \cite[(4.22)]{LS2017}), we have
\begin{align*}
\Omega_p(bD)({\EuScript T}(f)_{bD})^p
&\leq  \Omega_p(bD) \left({1\over \lambda(bD)}\int\limits_{bD} \big|\EuScript T(f)(z)\big|^q  d\lambda(z)\right)^{p\over q}\\
&\lesssim \widetilde c_\epsilon^p\Omega_p(bD) \left({1\over \lambda(bD)}\int\limits_{bD} \big|f(z)\big|^q  d\lambda(z)\right)^{p\over q}\\
&\lesssim \widetilde c_\epsilon^p \Omega_p(bD) \inf_{z\in bD} \Big(M(|f|^q)(z)\Big)^{p\over q}\\
&\lesssim  \widetilde c_\epsilon^p\int\limits_{bD}\Big(M(|f|^q)(z)\Big)^{p\over q}d\Omega_p(z)\\
&\leq \widetilde c_\epsilon^p [\Op]_{A_p}^{p\cdot \max\{1,{1\over p-1}\}} M (p,D, \omega)\|f\|_{L^p(bD,\Omega_p)}^{p}.
\end{align*}
Therefore, \eqref{R L1 R/s} holds, which, together with \eqref{R Lp R/s}, implies that
the claim \eqref{R Lp R/s claim} holds.

As a consequence, we obtain that for every $1<p<\infty$,
\begin{align}\label{R Lp R/s claim outcome}
\left\|{ \EuScript R_\epsilon^{\sharp,s}}(f)\right\|_{L^p(bD,\Omega_p)}\leq s\cdot \widetilde c_\epsilon \cdot[\Op]_{A_p}^{\max\{1,{1\over p-1}\}}M (p,D, \omega)\|f\|_{L^p(bD,\Omega_p)}.
\end{align}
By using the fact that $\Omega_p^{-{1\over p-1}}$ is in $A_{p'}$ when $\Omega_p$ is in $A_p$ ($p'$ is the conjugate index of $p$), and by using duality, we obtain that 
\begin{align}\label{R Lp R/s claim outcome2}
\left\|\big({ \EuScript R_\epsilon^{\sharp,s}}\big)^*(f)\right\|_{L^p(bD,\Omega_p)}\leq s\cdot \widetilde c_\epsilon \cdot [\Op]_{A_p}^{\max\{1,{1\over p-1}\}}M (p,D, \omega)\|f\|_{L^p(bD,\Omega_p)}.
\end{align}
Hence, we have 
\begin{align}\label{B Lp}
\left\|\EuScript B_\epsilon^s(f)\right\|_{L^p(bD,\Omega_p)}\leq s\cdot \widetilde c_\epsilon \cdot [\Op]_{A_p}^{\max\{1,{1\over p-1}\}}M (p,D, \omega)\|f\|_{L^p(bD,\Omega_p)}.
\end{align}
Since $\epsilon $ is a fixed small positive constant, we see that $$s\cdot \widetilde c_\epsilon \approx \epsilon^{1/2}$$
if $s$ is sufficiently small. Thus, we obtain that
\begin{align}\label{B Lp small}
\left\|\EuScript B_\epsilon^s(f)\right\|_{L^p(bD,\Omega_p)}\lesssim \epsilon^{1/2} \cdot [\Op]_{A_p}^{\max\{1,{1\over p-1}\}}M (p,D, \omega)\|f\|_{L^p(bD,\Omega_p)}.
\end{align}
Combining the estimates of \eqref{A Lp} and \eqref{B Lp small}, we obtain that the claim
\eqref{E:s-to-star-s} holds.  

We now prove \eqref{E: C phi}. To begin with, following \cite[Section 6.2]{LS2017}, we consider a partition of
$\mathbb C^n$ into disjoint cubes of side-length $\gamma$, given by 
$\mathbb C^n =\cup_{k \in \mathbb Z^{2n}} Q^\gamma_k$.
Then we revert to our domain $D$. For a fixed $\gamma >0$, we write $1_k$ for the characteristic function of 
$Q^\gamma_k\cap bD$. 
Then we consider $1_k(z) (\EuScript C_\epsilon^s)^* (1_j f)(z)$. 
we note that if $z$ is not in the support of $f$, then
   \begin{equation}
   1_k(z) (\EuScript C_\epsilon^s)^* (1_j f)(z) = 1_k(z)\int\int_{bD} \overline{ C_\epsilon (z,w)} \chi_s(w,z) f(w) 1_j (w) d\lambda(w).
  \end{equation}
Thus, by choosing $s$ small enough and $\gamma = cs$, where $c$ is a fixed positive constant, we see that
$1_k(z) (\EuScript C_\epsilon^s)^* (1_j f)(z)$ vanishes if $Q_j^\gamma$ and $Q_k^\gamma$ do not touch. 
Next, following the proof in \cite[Section 6.3]{LS2017}, and combining our result Theorem \ref{T:5.1}  we see that 
\begin{align*}\|1_k(\EuScript C_\epsilon^s)^* (1_j f) \|_{L^p(bD,\Omega_p)}&\lesssim \epsilon \|\EuScript C_\epsilon^s\|_{L^p(bD,\Omega_p) \to L^p(bD,\Omega_p)}  \|f \|_{L^p(bD,\Omega_p)} \\
&\leq \epsilon [\Op]_{A_p}^{\max\{1,{1\over p-1}\}} M (p,D, \omega)\|f \|_{L^p(bD,\Omega_p)}.
\end{align*}
As a consequence, by using \cite[Lemma 24]{LS2017}, we obtain that our claim
 \eqref{E: C phi} holds.

The proof of Proposition \ref{prop cancellation} is complete.
\end{proof}

\vskip0.2in

\section{
 Proofs of Theorem \ref{T:Szego-for-A_p}}\label{S:4}
\setcounter{equation}{0}

 We start with \eqref{E:LS-2} and thus for any $\epsilon >0$ we write
  \begin{equation*}
  \Sopo g = \EuScript C_\epsilon\Tinv g\ +\ 
   \Sopo\circ\big((\EuScript R^s_\epsilon)^\dagger-\EuScript R^s_\epsilon\big)\circ 
   \Tinv g\ =: \mathfrak A_\epsilon g +\mathfrak B_\epsilon g,
  \end{equation*}
  where
  $$\mathcal T^s_\epsilon h := \bigg(I - \big((\EuScript C_\epsilon^s)^\dagger - \EuScript C_\epsilon^s\big)\bigg)h.$$

We fix $\Ot$ arbitrarily. To streamline the notations, we write $M(D,\omega)$ for  {$M (2,D, \omega)$ as in \eqref{E:s-to-dag-s}}, see 
Proposition \ref{prop cancellation}.
We now choose $\epsilon = \epsilon (\Ot)$ such that
\begin{equation}\label{E:epsilon}
 \frac14\leq \epsilon^{1/2} [\Ot]_{A_2}M (D, \omega) \leq \frac12,
\end{equation}
where $\pt$ is the density of $\Ot$. Thus, Proposition \ref{prop cancellation} grants
$$
\|\big((\EuScript C_\epsilon^s)^\dagger - \EuScript C_\epsilon^s\big)^j g\|_{L^2(bD, \Ot)} \leq \frac{1}{2^j}\|g\|_{L^2(bD, \Ot)},\quad j=0, 1, 2,\ldots.
$$
Hence for $\epsilon = \epsilon (\Ot)$ as in \eqref{E:epsilon} we have that
\begin{equation}\label{E:bd}
\|\Tinv g\|_{L^2(bD, \Ot)}\leq 2 \|g\|_{L^2(bD, \Ot)},
\end{equation}
 and by Theorem \ref{T:5.1} (for $p=2$) we conclude that
$$
\| \mathfrak A_\epsilon g\|_{L^2(bD, \Ot)} \leq C_1(D, \omega)\cdot   {\color{red}c_\epsilon}\cdot[\Ot]_{A_2}\,\|g\|_{L^2(bD, \Ot)}.
$$
Combining the estimate of $c_\epsilon$ in \eqref{cepsilon bound} and the choice of $\epsilon$ in \eqref{E:epsilon},
we obtain that 
\begin{align}\label{Aepsilon quantitative bound}
\| \mathfrak A_\epsilon g\|_{L^2(bD, \Ot)} \lesssim  [\Ot]_{A_2}^3\,\|g\|_{L^2(bD, \Ot)}.
\end{align}

We point out that from Remark \ref{remark power}, the term $\epsilon^{1/2}$ can be improved to $\epsilon^{\delta}$ for any small $\delta>0$. Hence, the term $[\Ot]_{A_2}^3$ in \eqref{Aepsilon quantitative bound} could be improved to $[\Ot]_{A_2}^{2+\delta}$ for any small $\delta>0$.

We next proceed to bound the norm of $ \mathfrak B_\epsilon g$; to this end we recall that the reverse H\"older inequality for $\Ot =\pt d\lambda$ (\cite[Theorem 9.2.2]{Gra})
\begin{equation}\label{E:reverse-holder}
\left(\frac{1}{\lambda(bD)} \ \int\limits_{bD}\pt^{1+\gamma}\!(z)\,d\lambda(z)\right)^{\!\!\frac{1}{\gamma +1}}\,\leq \,
C(\Ot)\,\frac{1}{\lambda(bD) } 
\int\limits_{bD}\pt (z) d\lambda (z)
\end{equation}
is true for for some $\gamma = \gamma(\Ot)>0$.

Recall that by keeping track of the constants (see \cite[(9.2.6) and (9.2.7)]{Gra}),
we have that
$$
\sup\limits_{\Ot\in A_2} \gamma(\Ot)\leq1
$$
and that there exists an absolute positive constant $C_2$ such that 
$$
\sup\limits_{\Ot\in A_2} C(\Ot)\leq C_2^2<\infty.
$$
Hence, we obtain that
\begin{equation}\label{E:reverse-holder 1}
\left(\ \int\limits_{bD}\pt^{1+\gamma}\!(z)\,d\lambda(z)\right)^{\!\!\frac{1}{\gamma +1}}\,\leq \,
C_2^2\,\frac{1}{\lambda(bD)^{\frac{\gamma}{\gamma +1}}}
\int\limits_{bD}\pt (z) d\lambda (z).
\end{equation}

Recall that, for Leray Levi-like measures we have  $d\omega (z) = \varphi(z)\, d\lambda (z),$
where
$$
0<m_\omega\leq \varphi (z)\leq M_\omega <\infty\quad \text{for any}\ \  z\in bD.
$$
 Hence, using the shorthand
 $$
 B_\epsilon g = \Sopo H,\quad H:= \big((\EuScript R^s_\epsilon)^\dagger-\EuScript R^s_\epsilon\big)h,\quad
 h:= \Tinv g
 $$
 we have
 $$
 \|B_\epsilon g\|_{L^2(bD, \Ot)} \leq m_\omega^{-\frac{\gamma}{\gamma+1}}\left(\ \int\limits_{bD}|\Sopo H|^2(z)\,\varphi (z)^{\frac{\gamma}{\gamma+1}}\pt (z)\,d\lambda(z)\right)^{\!\!\frac12}.
 $$
 H\"older's inequality for $\displaystyle{q:= \frac{\gamma+1}{\gamma}}$, where $\gamma =\gamma(\Ot)$ is as in \eqref{E:reverse-holder}, now gives that
 $$
  \|B_\epsilon g\|_{L^2(bD, \Ot)} \leq m_\omega^{-\frac{\gamma}{\gamma+1}}
  \left(\ \int\limits_{bD}|\Sopo (H)|^{\frac{2(\gamma+1)}{\gamma}}(z)\, d\omega(z)\right)^{\!\!\frac{\gamma}{2(\gamma +1)}}\!\left(\ \int\limits_{bD}\pt^{1+\gamma}(z)\, d\lambda (z)\right)^{\!\!\frac{1}{2(1+\gamma)}}.
 $$
 Comparing the above with \eqref{E:reverse-holder 1} we obtain
 $$
  \|B_\epsilon g\|_{L^2(bD, \Ot)} \leq m_\omega^{-\frac{\gamma}{\gamma+1}} C_2
  \|\Sopo (H)\|_{L^p(bD, \omega)}
  \frac{1}{\lambda(bD)^{\frac{\gamma}{2(1+\gamma)}}}\left(\ \int\limits_{bD}\pt (z)\, d\lambda(z)\right)^{\!\!\frac12},
 $$
 where $\displaystyle{p:= \frac{2(1+\gamma)}{\gamma}}$. But
 $$
 \|\Sopo (H)\|_{L^p(bD, \omega)} \leq C (\omega, D) \|H\|_{L^p(bD, \omega)}
 $$
 by the $L^p(bD, \omega)$-regularity of $\Sopo$ \cite{LS2017}, since $H$ is shorthand for $\big((\EuScript R^s_\epsilon)^\dagger-\EuScript R^s_\epsilon\big)h$, and  each of $(\EuScript R^s_\epsilon)$ and $(\EuScript R^s_\epsilon)^\dagger$
 takes $L^1(bD, \omega)$ to $L^\infty(bD, \omega)$ (again by \cite{LS2017}). We obtain
 \begin{align*}
 \|\Sopo (H)\|_{L^p(bD, \omega)} &\leq C(\omega, D)\Omega_p(bD)^{\!\frac1p}\|h\|_{L^1(bD, \omega)}\\
 &\leq C (\omega, D) \Omega_p(bD)^{\!\frac1p}M_\omega \|h\|_{L^2(bD, \Ot)}\bigg(\ \int\limits_{bD}\pt^{-1}(z)d\lambda(z)\bigg)^{\!\!\frac12}.
 \end{align*}
 But $h:= \Tinv g$ and choosing $\epsilon = \epsilon (\Ot)$ as in \eqref{E:epsilon} we have that 
 $$
 \|h\|_{L^2(bD, \Ot)}\leq 2 \|g\|_{L^2(bD, \Ot)}\quad \text{by}\quad \eqref{E:bd}\, .
 $$
Combining all the pieces we obtain
\begin{equation*}\label{E:bound B}
 \|B_\epsilon g\|_{L^2(bD, \Ot)} \leq 2\,C_2\,C (\omega, D) M_\omega
 \bigg(m_\omega^{-1} \!\!
 \left(\frac{\Omega_p(bD)}{\lambda(bD)}\right)^{\!\!\frac12}
 \bigg)^{\frac{\gamma}{\gamma+1}}\, [\Ot]_{A_2}^{\frac12}
\|g\|_{L^2(bD, \Ot)}.
\end{equation*}
Hence the desired bound for $ \|B_\epsilon g\|_{L^2(bD, \Ot)}$ holds true because
$$
 \bigg(m_\omega^{-1} 
 \left(\frac{\Omega_p(bD)}{\lambda(bD)}\right)^{\!\!\frac12}
 \bigg)^{\!\!{\frac{\gamma}{\gamma+1}}}\! \leq\, C_3 (\omega, D)<\infty \quad \text{for any}\ 0\leq\gamma<\infty.
$$
The proof of Theorem \ref{T:Szego-for-A_p} is complete {once we recall that $[\Ot]_{A_2}\geq 1$}. \qed

\vskip0.4in

{\bf Acknowledgements.}
X. Duong and Ji Li are supported by the Australian Research Council (award no. DP 220100285). L. Lanzani is 
a member of the INdAM group GNAMPA.
B.\,D. Wick
is partially supported by the National Science Foundation (award no. DMS--1800057) and the Australian Research Council, award no. DP 220100285. We thank the mathematical research institute MATRIX in Australia where part of this research was performed.

\vskip0.1in
\bibliographystyle{amsplain}

\end{document}